\documentclass[12pt,amsymb,fullpage]{amsart}
\usepackage{amssymb,amscd,pstricks}

\newtheorem{theorem}{Theorem}[section]
\newtheorem{defn}[theorem]{Definition}

\newtheorem{lemma}[theorem]{Lemma}

\newtheorem{eple}[theorem]{Example}
\newtheorem{rmk}[theorem]{Remarks}
\newtheorem{dsc}[theorem]{Discussion}
\newtheorem{nota}[theorem]{Notation}

\newsavebox{\indbin}
\savebox{\indbin}{\begin{picture}(0,0)
\newlength{\gnu}
\settowidth{\gnu}{$\smile$} \setlength{\unitlength}{.5\gnu}
\put(-1,-.65){$\smile$} \put(-.25,.1){$|$}
\end{picture}}

\newcommand{\be}{\begin{enumerate}}
\newcommand{\bd}{\begin{defn}}
\newcommand{\bt}{\begin{theorem}}
\newcommand{\bl}{\begin{lemma}}
\newcommand{\ee}{\end{enumerate}}
\newcommand{\ed}{\end{defn}}
\newcommand{\et}{\end{theorem}}
\newcommand{\el}{\end{lemma}}

\begin{document}
\title{A Theory of Harmonic Variations}
\author{Tristram de Piro}
\address{Flat 1, 98 Prestbury Road, Cheltenham, GL52 3BG. Research Commons Library, University of Exeter, Exeter, EX4 4SB}
\email{t.de-piro@exeter.ac.uk}
\thanks{Thanks to Julius Plucker}
\begin{abstract}
We consider a class of "harmonic variations" for nonsingular curves, obtained as asymptotic degenerations along bitangents. On a geometric level, we obtain an attractive relationship between the class and the genus of $C$. The distribution of class points in pairs across nonsingular curves with such variations, further suggests applications to understanding covalent bonding in terms of shared electrons.
\end{abstract}
\maketitle
\begin{section}{Alcoves and Class Formulas}
\label{alcoves}

Let $n$ be an odd number, and $C$ a circle of radius $1$, centred about the origin $(0,0)$, of a real coordinate system $(x,y)$. Suppose that a regular $n$-sided polygon is inscribed inside the circle, with vertices $\{p_{0},\ldots,p_{j},\ldots,p_{n-1}\}$, with coordinates $e^{{2\pi ij\over n}}$ and $\{l_{0},\ldots,l_{j},\ldots,l_{n-1}\}$ are the lines formed by the edges of the polygon, so that $l_{j}$ passes through the pair of vertices $\{p_{j},p_{j+1}\}$, mod$(n)$. By construction, the $n$ intersections
$(l_{j}\cap l_{j+1})$, for $0\leq j\leq n-1$, mod$(n)$, lie on the unit circle. We claim, more generally, that;\\

\begin{lemma}
\label{equal}
If $1\leq k\leq {n-1\over 2}$, the $n$ intersections $(l_{j}\cap l_{j+k})$, mod$(n)$, lie on a circle, centred about $(0,0)$,  of radius ${sin({\pi\over 2}(1-{2\over n}))\over sin({\pi\over 2}(1-{2k\over n}))}$, with equal angles subtended by consecutive pairs to the origin $(0,0)$.
\end{lemma}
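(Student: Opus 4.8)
The plan is to put each edge-line into Hesse normal form and then solve the resulting two-line system directly. Since $l_{j}$ is the chord of the unit circle joining $p_{j}=e^{2\pi ij/n}$ and $p_{j+1}=e^{2\pi i(j+1)/n}$, its midpoint lies in the direction bisecting these two vertices, namely at angle $\phi_{j}:=\frac{\pi(2j+1)}{n}$, and the foot of the perpendicular from $(0,0)$ to $l_{j}$ sits at distance equal to the apothem $\cos(\pi/n)$. Thus I would record $l_{j}$ as
\[
x\cos\phi_{j}+y\sin\phi_{j}=\cos(\pi/n),
\]
equivalently $\text{Re}(z\,e^{-i\phi_{j}})=\cos(\pi/n)$ on writing $z=x+iy$. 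This is the one computation to get right; everything else follows formally.

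Next I would intersect $l_{j}$ with $l_{j+k}$. Writing $z=re^{i\theta}$, the two defining equations become $r\cos(\theta-\phi_{j})=\cos(\pi/n)$ and $r\cos(\theta-\phi_{j+k})=\cos(\pi/n)$, where $\phi_{j+k}=\phi_{j}+\frac{2\pi k}{n}$. Equating the left-hand sides forces $\cos(\theta-\phi_{j})=\cos(\theta-\phi_{j+k})$, so the intersection lies on the bisecting ray $\theta=\tfrac12(\phi_{j}+\phi_{j+k})=\phi_{j}+\frac{\pi k}{n}$. Substituting back gives $r\cos(\pi k/n)=\cos(\pi/n)$, hence
\[
r=\frac{\cos(\pi/n)}{\cos(\pi k/n)}.
\]
Here the hypothesis $1\le k\le\frac{n-1}{2}$ enters essentially: it guarantees $0<\frac{\pi k}{n}<\frac{\pi}{2}$, so $\cos(\pi k/n)>0$ and $r$ is a well-defined positive radius; it also shows the normals of $l_{j}$ and $l_{j+k}$ differ by $\frac{2\pi k}{n}\neq\pi$, so the two lines are never parallel and the intersection is unique.

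To match the stated form I would apply the complementary-angle identity $\cos\psi=\sin(\frac{\pi}{2}-\psi)$ to numerator and denominator, rewriting $\cos(\pi/n)=\sin(\frac{\pi}{2}(1-\frac{2}{n}))$ and $\cos(\pi k/n)=\sin(\frac{\pi}{2}(1-\frac{2k}{n}))$, which reproduces the claimed radius exactly; as a sanity check, $k=1$ collapses to $r=1$, recovering the fact that consecutive edges meet on the unit circle at the vertices $p_{j+1}$. Finally, for the angular statement, the intersection $l_{j}\cap l_{j+k}$ sits at argument $\theta_{j}=\phi_{j}+\frac{\pi k}{n}=\frac{\pi(2j+1+k)}{n}$, so replacing $j$ by $j+1$ increases the argument by exactly $\frac{2\pi}{n}$; hence the $n$ intersection points are equally spaced on the circle of radius $r$, subtending equal central angles $\frac{2\pi}{n}$ at the origin. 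I expect no genuine obstacle: the only places demanding care are the normal-form derivation and the positivity check on $\cos(\pi k/n)$, which is precisely what pins down the admissible range of $k$.
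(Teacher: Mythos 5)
Your proof is correct, but it takes a genuinely different route from the paper. The paper argues synthetically: it forms the triangle with vertices $p_{j}$, the origin $O$, and the intersection point $C=(l_{j}\cap l_{j+k})$, chases the angles ($\alpha=\frac{\pi}{2}(1-\frac{2}{n})$ at $p_{j}$, $\beta=\frac{\pi(k+1)}{n}$ at $O$, $\gamma=\frac{\pi}{2}(1-\frac{2k}{n})$ at $C$), and applies the sine rule to get $r=\sin\alpha/\sin\gamma$ directly in the stated form; the equal-spacing claim is then a separate observation that consecutive intersections subtend angle $\frac{2\pi}{n}$ at $O$. You instead work analytically: the Hesse normal form $x\cos\phi_{j}+y\sin\phi_{j}=\cos(\pi/n)$ with $\phi_{j}=\frac{\pi(2j+1)}{n}$, solved in polar coordinates, yields $r=\cos(\pi/n)/\cos(\pi k/n)$ and the argument $\theta_{j}=\frac{\pi(2j+1+k)}{n}$ simultaneously, after which the complementary-angle identity recovers the paper's expression. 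Each approach has its advantages: the paper's is shorter once the angles are known, but the key step $\beta=\frac{\delta(k+1)}{2}$ is asserted rather than derived and takes some geometric insight to verify; your computation makes everything explicit --- the role of $1\leq k\leq\frac{n-1}{2}$ in ensuring $\cos(\pi k/n)>0$ and non-parallelism, the uniqueness of the intersection (two non-parallel lines meet once, and your exhibited point lies on both), and the equal angular spacing, which falls out of the formula for $\theta_{j}$ with no extra work rather than being a second argument. Your $k=1$ sanity check against the vertices on the unit circle is a nice touch absent from the paper.
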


\begin{proof}
For convenience of notation, let $O$ denote the origin $(0,0)$, and let $C$ denote the intersection $(l_{j}\cap l_{j+k})$. Let $\{\alpha,\beta,\gamma\}$ denote the angles $\{Op_{j}C,p_{j}OC,p_{j}CO\}$ of the triangle with vertices $\{p_{j},O,C\}$, let $\delta$ denote the angle $p_{j}Op_{j+1}$ of the triangle with vertices $\{p_{j},O,p_{j+1}\}$, let $\epsilon$ denote the angle between the lines $l_{j}$ and $l_{j+1}$ and $r$ the length of the edge $OC$. We have that;\\

$\delta=\epsilon={2\pi\over n}$\\

$\alpha={\pi-\epsilon\over 2}={\pi\over 2}(1-{2\over n})$\\

$\beta={\delta(k+1)\over 2}={\pi(k+1)\over n}$\\

$\gamma=\pi-(\alpha+\beta)={\pi\over 2}(1-{2k\over n})$\\

By the sine rule, applied to the triangle $p_{j}OC$, we have that;\\

$r={sin(\alpha)\over sin(\gamma)}={sin({\pi\over 2}(1-{2\over n}))\over sin({\pi\over 2}(1-{2k\over n}))}$\\

as required. The last claim follows easily from calculating the angle $C_{1}OC_{2}={2\pi\over n}$, for two consecutive intersections in the set $l_{j}\cap l_{j+k}$.

\end{proof}

\begin{rmk}
\label{attractive}
It follows that all of the $C^{n}_{2}$ intersections between the $n$ lines $\{l_{0},\ldots,l_{j},\ldots,l_{n-1}\}$ lie on concentric circles about the origin $O$. The pattern of lines and intersections forms an attractive radiating pattern, harmoniously arranged in the plane.

\end{rmk}

If $n$ is an even number, we can perform the same construction, but obtain a slightly modified version of the previous lemma;\\

\begin{lemma}
\label{intersections}
Let hypotheses and notation be as above. If $1\leq k\leq {n-2\over 2}$, the $n$ intersections $(l_{j}\cap l_{j+k})$ are arranged as in Lemma \ref{equal}. If $k={n\over 2}$, there exist ${n\over 2}$ intersections in the set $l_{j}\cap l_{j+k}$, situated on the circle at $\infty$, in the real projective plane ${\mathcal R}P^{2}$.

\end{lemma}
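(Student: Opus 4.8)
The plan is to follow the two-case structure of the statement, reusing Lemma~\ref{equal} wholesale for the generic range $1\leq k\leq \frac{n-2}{2}$ and treating $k=\frac{n}{2}$ as the genuinely new, degenerate case.

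For the first range I would observe that the angle computations in the proof of Lemma~\ref{equal} --- the identities for $\delta,\epsilon,\alpha,\beta,\gamma$ --- are facts about the inscribed regular $n$-gon that hold for every $n$, with no appeal to the parity of $n$. Since $k\leq\frac{n-2}{2}$ forces $k<\frac{n}{2}$, the angle $\gamma=\frac{\pi}{2}(1-\frac{2k}{n})$ is strictly positive, so $\sin\gamma\neq 0$ and the sine rule delivers the same finite radius and the same equal-angle arrangement as before. This case therefore reduces verbatim to Lemma~\ref{equal} and requires no new argument.

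The substantive case is $k=\frac{n}{2}$. Here $\gamma=\frac{\pi}{2}(1-1)=0$, so the radius $r=\frac{\sin\alpha}{\sin\gamma}$ diverges: as $k\uparrow\frac{n}{2}$ the concentric circle of Lemma~\ref{equal} escapes to infinity, which already indicates that the intersection points must be sought on the line at infinity of ${\mathcal R}P^{2}$. To make this precise I would show that $l_{j}$ and $l_{j+n/2}$ are parallel. The key identity is $p_{j+n/2}=e^{2\pi i(j+n/2)/n}=e^{\pi i}p_{j}=-p_{j}$, and likewise $p_{j+n/2+1}=-p_{j+1}$; hence $l_{j+n/2}$ is the image of $l_{j}$ under the central reflection $z\mapsto -z$. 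Since a central reflection is a rotation by $\pi$ and preserves the direction of every line, $l_{j+n/2}\parallel l_{j}$. For $n\geq 4$ no edge of the polygon is a diameter, so the two lines are genuinely distinct, and two distinct parallel affine lines meet in ${\mathcal R}P^{2}$ at exactly one point of the line at infinity, namely the point determined by their common direction.

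It remains to count these points, and this bookkeeping is where I expect the only real care to be needed. The direction of the chord $l_{j}$ advances by $\frac{2\pi}{n}$ as $j$ increases by one, so, reading directions modulo $\pi$, two edges $l_{j}$ and $l_{j'}$ are parallel exactly when $j\equiv j'\ (\mathrm{mod}\ \frac{n}{2})$. Thus the $n$ edges split into precisely $\frac{n}{2}$ parallel classes, each of size two and of the form $\{l_{j},l_{j+n/2}\}$, and each class contributes a single point at infinity. Equivalently, the unordered pair $\{l_{j},l_{j+n/2}\}$ is invariant under $j\mapsto j+\frac{n}{2}$, so as $j$ ranges over $0,\ldots,n-1$ the intersections coincide in pairs, leaving exactly $\frac{n}{2}$ distinct points on the line at infinity. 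The parallelism itself being immediate from the central-reflection identity, I anticipate this collapse from $n$ to $\frac{n}{2}$ distinct directions --- rather than any deeper geometric difficulty --- to be the main point to pin down.
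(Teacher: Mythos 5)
Your proposal is correct and follows essentially the same route as the paper: the range $1\leq k\leq\frac{n-2}{2}$ is absorbed into Lemma~\ref{equal}, while for $k=\frac{n}{2}$ one observes that $l_{j}$ and $l_{j+n/2}$ are parallel and that the $\frac{n}{2}$ pairs of parallel lines have distinct gradients, hence meet in $\frac{n}{2}$ distinct points on the line at infinity of ${\mathcal R}P^{2}$. The only difference is that you supply the details the paper leaves as an observation --- the central-reflection identity $p_{j+n/2}=-p_{j}$ proving parallelism, and the count of direction classes modulo $\pi$ --- which strengthens rather than changes the argument.
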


\begin{proof}

It is sufficient to observe that, when $k={n\over 2}$, the lines $l_{j}$ and $l_{j+k}$, mod$(n)$, are parallel, in the plane with affine coordinates $(x,y)$. Embedding the real affine plane in the projective plane ${\mathcal R}P^{2}$, we obtain ${n\over 2}$
intersections between the ${n\over 2}$ pairs of parallel lines in the set $\{l_{0},\ldots,l_{j},\ldots,l_{n-1}\}$, as the pair gradients are distinct.

\end{proof}

\begin{defn}
\label{graph}
We say that $n$ lines in the real projective plane ${\mathcal R}P^{2}$ are in general position, if no three of the lines intersect in a point. We say that the lines are in bounded position, if all of the intersections lie in the affine plane ${\mathcal R}^{2}$. To $n$ lines $\{l_{0},\ldots,l_{n-1}\}$ in bounded general position, we can associate a graph $G$, whose \emph{vertices} consist of the $C^{n}_{2}$ intersections of the lines in the affine plane. We say that two vertices $\{v_{1},v_{2}\}\subset G$ are connected by an \emph{edge} if;\\

 (i). There exists a line in the set $\{l_{0},\ldots,l_{n-1}\}$ containing $v_{1}$ and $v_{2}$.\\

 (ii). There does not exist a third vertex $v_{3}$, lying between $v_{1}$ and $v_{2}$, on the same line.\\

We define an edge to be the closed line segment, connecting two such vertices. We define an \emph{alcove} of the graph $G$ by the following properties;\\

 (i). A compact convex connected subset $V$ of ${\mathcal R}^{2}$.\\

 (ii). The boundary $\delta V$ is a union of edges, belonging to distinct lines.\\

 (iii). $V$ does not contain a proper subset $W$, satisfying properties $(i)$ and $(ii)$.\\

\end{defn}

We now show that;\\

\begin{lemma}
\label{convex}
If $\{v_{0},\ldots,v_{j},\ldots,v_{n-1}\}$ are vertices, connected by edges $\{e_{0},\ldots,e_{j},\ldots,e_{n-1}\}$, lying on distinct lines, forming a convex $n$-polygon $V$, then $V$ is an alcove.
\end{lemma}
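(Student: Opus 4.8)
The plan is to verify the three defining properties of an alcove from Definition \ref{graph} in turn, the first two being immediate and the third---minimality---carrying all the weight. Properties (i) and (ii) hold by hypothesis: a convex $n$-polygon is a compact convex connected subset of ${\mathcal R}^{2}$, and its boundary $\partial V$ is by assumption the union $e_{0}\cup\cdots\cup e_{n-1}$ of edges lying on distinct lines. So the task reduces to showing that $V$ contains no proper subset $W$ satisfying (i) and (ii).

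The crucial preliminary step is to show that, under the standing assumption of bounded general position, no line of the arrangement meets the open interior of $V$. Write $l_{0},\ldots,l_{n-1}$ for the distinct lines carrying the edges $e_{0},\ldots,e_{n-1}$, so that the corner $v_{j}=e_{j-1}\cap e_{j}$ equals $l_{j-1}\cap l_{j}$, indices mod $n$. Suppose some arrangement line $m$ met the interior of $V$. Since $V$ is convex and $m$ does not contain a full edge (otherwise $m=l_{j}$ is a supporting line, touching $V$ only along $e_{j}$ and not its interior), $m$ would cross $\partial V$ transversally in two points. Neither can be a corner $v_{j}$: a corner lies on the two lines $l_{j-1},l_{j}$, and a third line through it would violate general position. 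Hence each crossing lies in the relative interior of some edge $e_{j}$, at a point $q=m\cap l_{j}$, which is a vertex of $G$ strictly between $v_{j}$ and $v_{j+1}$---contradicting clause (ii) of the edge definition. Therefore every arrangement line is either disjoint from the interior of $V$ or is one of the supporting lines $l_{j}$, and in particular $l_{j}\cap V=e_{j}$.

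With this in hand I would finish as follows. Let $H_{j}$ denote the closed half-plane bounded by $l_{j}$ and containing $V$; since $V$ is a bounded convex polygon with one edge on each $l_{j}$, we have $V=\bigcap_{j=0}^{n-1}H_{j}$. Now let $W\subseteq V$ be any compact convex connected set whose boundary is a union of edges on distinct lines. Each such edge lies on an arrangement line meeting $V$, hence on some $l_{j}$, and being contained in $W\subseteq V$ it lies inside $l_{j}\cap V=e_{j}\subseteq\partial V$. Writing $S\subseteq\{0,\ldots,n-1\}$ for the set of lines carrying edges of $W$, the representation of a compact convex polygon as the intersection of the half-planes of its edges gives $W=\bigcap_{j\in S}H_{j}$ (the relevant half-plane is $H_{j}$ because $W\subseteq V\subseteq H_{j}$). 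Since $S$ indexes a subset of the constraints defining $V$, we get $W=\bigcap_{j\in S}H_{j}\supseteq\bigcap_{j=0}^{n-1}H_{j}=V$, whence $W=V$. Thus no proper such $W$ exists, property (iii) holds, and $V$ is an alcove.

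The main obstacle is the preliminary step---ruling out any arrangement line through the interior of $V$. This is where both hypotheses genuinely enter: general position excludes crossings at the corners, and the maximality clause (ii) in the definition of an edge excludes crossings in the relative interior of a side. I would take care to treat separately the degenerate possibility that $m$ contains an entire edge (so that $m$ is a supporting line rather than a transversal) and to confirm that $V$ is indeed the intersection of exactly the $n$ half-planes $H_{j}$, with no redundant or missing constraint.
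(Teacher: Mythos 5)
Your argument is correct, and its substantive core is the same as the paper's: everything reduces to showing that an arrangement line other than $l_{0},\ldots,l_{n-1}$ cannot pass through $V$, since a crossing at a corner would put three lines through one point (violating general position) and a crossing in the relative interior of a side $e_{j}$ would be a vertex of $G$ interior to an edge (violating clause (ii) of the edge definition); from this, both you and the paper conclude that every edge of a competing region $W$ lies in some $e_{j}\subseteq\delta V$. The two proofs differ only in how they close. The paper deduces $\delta W\subseteq\delta V$ and argues that $\delta W$, being connected, must equal all of $\delta V$, whence $W=V$; you instead write $W$ as the intersection of the closed half-planes supported by its edges, $W=\bigcap_{j\in S}H_{j}$, and use monotonicity of intersections to get $W\supseteq\bigcap_{j=0}^{n-1}H_{j}=V$. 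Your finish is arguably tighter: the paper's connectedness step tacitly uses that a proper connected sub-union of the sides of a convex polygon cannot bound a compact convex set, whereas your half-plane computation makes the inclusion explicit, at the cost of invoking the standard representation of a compact convex polygon as an intersection of half-planes. Two small points to patch. First, your preliminary step only excludes arrangement lines through the \emph{interior} of $V$, but in your second paragraph you need that any arrangement line containing an edge of $W$ is one of the $l_{j}$; add the one-line observation that a line distinct from every $l_{j}$ meets $\delta V$, and hence $V$, in only finitely many points, so it cannot contain a positive-length segment of $W\subseteq V$. Second, both your proof and the paper's silently assume $W$ has nonempty interior: a single edge formally satisfies conditions (i) and (ii) as literally stated, so the definition of an alcove is presumably intended to require a two-dimensional region, and neither closing argument covers that degenerate case.
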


\begin{proof}
  Suppose not, then clearly condition $(iii)$ fails. We can, therefore,  find a proper subset $W\subset V$, satisfying conditions $(i)$ and $(ii)$. Let $e$ be one of the edges of $W$, belonging to a line $l$. Suppose the edges of $V$ belong to lines $\{l_{0},\ldots,l_{j},\ldots,l_{n-1}\}$ respectively. If, $l$ does not coincide with one of these lines, then, by the definition of general position, it must intersect one of them in a vertex, distinct from $\{v_{1},\ldots,v_{j},\ldots,v_{n-1}\}$, on $\delta V$. This contradicts the fact that $\{e_{0},\ldots,e_{j},\ldots,e_{n-1}\}$ are edges. It follows, that $l$ must coincide with one of the lines $\{l_{0},\ldots,l_{j},\ldots,l_{n-1}\}$, say $l_{0}$. If $e$ does not coincide with the edge $e_{0}$, then $l_{0}$ must contain a point in the interior $(V\setminus\delta V)$ of $V$. It is straightforward to show that this contradicts the assumption that $V$ is convex, (\footnote{Let $x$ be the interior point, and let $\{v_{0},v_{1},v_{2}\}$ be the vertices, connecting the edges $e_{0}$ and $e_{1}$.  By convexity, the triangles with vertices $\{x,v_{1},v_{2}\}$ and $\{v_{0},v_{1},v_{2}\}$ both lie inside $V$. This implies that the interior of the edge $e_{1}$ is interior to $V$, contradicting the fact that $e_{1}$ is contained in $\delta V$.}). It follows that the boundary $\delta W\subset \delta V$. As $\delta W$ is connected, $\delta W=\delta V$, hence,  $W$ must coincide with $V$, showing the result.
\end{proof}

We make the following definition;\\

\begin{defn}
\label{vertex}
If $\{v_{0},\ldots,v_{j},\ldots,v_{n-1}\}$ are vertices, lying on distinct lines, forming a convex $n$-polygon $V$, we define the vertex number of $V$, to be the number of vertices of the lines in bounded general position, either interior to $V$ or interior to the line segments forming the boundary $\delta V$ of $V$.
\end{defn}

\begin{rmk}
\label{polygon}
By the previous lemma, a convex $n$-polygon, with vertex number $0$, is an alcove.
 \end{rmk}

We then have that;\\

\begin{lemma}
\label{side}

If an edge $e$ forms one side of a convex $n$-polygon $V$, which is not an alcove, having vertex number $m>0$, then $e$ forms one side of a convex, at most $n+1$-sided polygon $W$, having vertex number $0\leq r<m$.
\end{lemma}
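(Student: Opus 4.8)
The plan is to reduce the vertex number by a single straight cut, keeping the edge $e$ intact. First I would record that the hypothesis $m>0$ already produces an arrangement line $l'$ meeting the open interior $V\setminus\delta V$. Indeed, an obstruction vertex is an intersection of two of the lines in bounded general position; if it lies in the interior of $V$, then each of the two lines through it already crosses the interior, while if it lies in the interior of a boundary side $s_c\subset l_c$, the second line through it meets $l_c$ transversally there and so passes from the exterior into the interior of $V$. In either case such a transversal $l'$ exists.

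Next I would cut $V$ along $l'$ and retain the piece containing $e$. The key point is that $l'$ cannot meet the closed edge $e$: since $e$ is an edge in the sense of Definition \ref{graph}, its relative interior contains no vertex, so $l'$ cannot cross it (the crossing point $l'\cap l_0$ would be such a vertex), and by general position $l'$ cannot pass through either endpoint of $e$ (that would force three concurrent lines). Hence $e$ lies strictly to one side of $l'$; let $W$ be the intersection of $V$ with the closed half-plane bounded by $l'$ that contains $e$. Then $W$ is compact and convex, $e$ is a full side of $W$, and the two points $p=l'\cap s_a$ and $q=l'\cap s_b$ at which $l'$ leaves $V$ lie in the relative interiors of two distinct sides other than $e$ (again by general position, since they cannot be corners of $V$). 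The corners of $W$ are therefore all arrangement vertices, and its sides lie on distinct lines, namely the original side-lines together with $l'$, which differs from all of them because a side-line never meets the interior of the convex region $V$. A count of the sides produced when a convex $n$-gon is cut by a chord joining the interiors of two of its sides shows that $W$ has at most $n+1$ sides.

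Finally I would compare the two vertex numbers. Every obstruction of $V$ lying strictly beyond $l'$ is deleted, while those on the $e$-side, and those in the interior of the chord, persist without creating anything genuinely new — an obstruction in the interior of the chord was already an interior obstruction of $V$. The decisive gain is that the exit points $p$ and $q$, which are obstructions of $V$ (arrangement vertices interior to the sides $s_a,s_b$), become corners of $W$ and hence cease to be obstructions. Thus $r\le m-2$; in particular $m\ge 2$ whenever $m>0$, so $0\le r<m$, as required. I expect the main obstacle to be precisely the bookkeeping of this last step: one must invoke general position carefully to certify that $p$ and $q$ are honest obstructions of $V$ that are converted into corners, and that drawing the chord across the interior introduces no new boundary-interior vertices beyond those already counted.
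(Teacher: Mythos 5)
Your proof is correct and follows essentially the same route as the paper's: both arguments cut $V$ along an arrangement line that crosses its interior (the paper takes the line through a vertex interior to a boundary segment, which exists since $V$ is not an alcove, cf. Lemma \ref{convex}) and retain the convex piece containing $e$, whose chord endpoints are obstruction vertices of $V$ converted into corners, giving at most $n+1$ sides and a strictly smaller vertex number. Your write-up is somewhat more explicit than the paper's (which leaves the count as ``easily checked''), notably in verifying that $l'$ misses the closed edge $e$ and in the bookkeeping $r\le m-2$, but the construction and the counting are the same.
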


\begin{proof}
Let $\{l_{0},\ldots,l_{j},\ldots,l_{n-1}\}$ enumerate the line segments, forming the boundary of $V$, with $e$ corresponding to $l_{0}$. As $V$ is not an alcove, there exists a vertex $v$, interior to one of these lines, on the boundary $\delta V$ of $V$. As $e$ is an edge, it cannot be interior to $l_{0}$, say $v$ is interior to $l_{1}$.  As $V$ is convex, there exists a new line $l_{n}$, passing through $v$. As the lines are in general position, and $e$ is an edge, the line $l_{n}$ must intersect the interior of one of the lines $\{l_{2},\ldots,l_{j},\ldots,l_{n-1}\}$, say $l_{j}$. It is easily checked that the polygon $W$, formed by the sides $\{e,l_{1},l_{n},l_{j},\ldots,l_{n-1}\}$ is convex, with $(n-j+3)$ sides, having vertex number $r<m$.
\end{proof}

As a straightforward consequence, we have that;\\

\begin{lemma}
\label{boundary}
Every edge $e$ lies on the boundary of at least one alcove.
\end{lemma}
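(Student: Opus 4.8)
The plan is to exhibit a single convex polygon having $e$ as one of its sides and then to shrink it, by repeated application of Lemma \ref{side}, until its vertex number drops to $0$, at which point Remark \ref{polygon} identifies it as an alcove containing $e$ on its boundary.

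First I would construct the initial polygon. Write $e=[v_{1},v_{2}]$, lying on a line $l_{0}$, and let $l_{a}$ (respectively $l_{b}$) be the second line of the arrangement passing through the vertex $v_{1}$ (respectively $v_{2}$); by general position each such line is unique, being the only line other than $l_{0}$ through that intersection point, and $l_{a}\neq l_{b}$, since two distinct points of $l_{0}$ cannot both lie on a common line other than $l_{0}$. Because the lines are in bounded general position, $l_{a}$ and $l_{b}$ meet in a vertex $q$ of the affine plane, and $q\notin l_{0}$ and $q\notin\{v_{1},v_{2}\}$, for otherwise three of the lines would be concurrent at $q$, contradicting general position. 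The triangle $V_{0}$ with corners $\{v_{1},v_{2},q\}$ is then a nondegenerate convex $3$-polygon whose corners are vertices of the arrangement and whose three sides lie on the distinct lines $l_{0},l_{a},l_{b}$; since $v_{1},v_{2}$ are consecutive vertices on $l_{0}$, the side of $V_{0}$ lying on $l_{0}$ is exactly the edge $e$. Thus $e$ forms one side of a convex polygon, which is precisely the hypothesis needed to start the descent.

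Next I would run the descent on the vertex number. If $V_{0}$ has vertex number $0$, then Remark \ref{polygon} already exhibits it as an alcove with $e$ on its boundary and we are done. Otherwise $V_{0}$ is not an alcove and its vertex number is some $m_{0}>0$, so Lemma \ref{side} applies and returns a convex polygon $V_{1}$, again having $e$ as one side, with vertex number $m_{1}<m_{0}$. Iterating, I obtain convex polygons $V_{0},V_{1},V_{2},\ldots$, each with $e$ as a side, whose vertex numbers $m_{0}>m_{1}>m_{2}>\cdots$ form a strictly decreasing sequence of nonnegative integers. By the well-ordering of the nonnegative integers this sequence is finite and must terminate at a polygon $V_{N}$ of vertex number $0$; by Remark \ref{polygon}, $V_{N}$ is an alcove, and it has $e$ on its boundary, as required. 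I note that the side count may increase by one at each step, per Lemma \ref{side}, but this is immaterial, since it is the vertex number and not the number of sides that drives termination.

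I expect the only genuine obstacle to lie in the construction of the initial polygon, specifically in verifying that $q=l_{a}\cap l_{b}$ is a bona fide affine vertex lying off $l_{0}$ and distinct from $v_{1},v_{2}$, so that $\{v_{1},v_{2},q\}$ genuinely span a nondegenerate triangle with $e$ as a full side rather than a proper subsegment of a side. Both facts are exactly where the standing hypotheses of general position and bounded position are consumed; once the triangle is in hand, the remainder is a routine application of Lemma \ref{side} combined with well-ordering.
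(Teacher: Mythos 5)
Your proof is correct and follows essentially the same route as the paper: the paper also builds the triangle on $e$ from the two further lines through its endpoints, and then descends by inductive application of Lemma \ref{side}, finiteness, and Remark \ref{polygon}. The only difference is cosmetic: you branch on the vertex number (tacitly using the unproved converse of Remark \ref{polygon} to justify invoking Lemma \ref{side}), whereas the paper branches on whether the current polygon is an alcove, which avoids needing that converse; either way the argument closes, since an alcove at any stage finishes the proof.
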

\begin{proof}

 Suppose that the edge $e$ has vertices $v_{0}$ and $v_{1}$, belonging to a line $l_{0}$, let $l_{1}$ and $l_{2}$ be further lines containing these vertices, respectively, intersecting in a vertex $v_{2}$. If the triangle with vertices $\{v_{0},v_{1},v_{2}\}$ is an alcove, the result is shown. Otherwise, it satisfies the hypotheses of the previous lemma; one may then apply the result inductively, together with the fact that the number of vertices are finite, and the previous remark, to obtain the same result.

\end{proof}

The following results use a different argument;\\

\begin{lemma}
\label{intersection}
If $\{V_{1},V_{2}\}$ are two distinct alcoves, then $(V_{1}\cap V_{2})\subset (\delta V_{1}\cup\delta V_{2})$ and, consists of an edge or a vertex.
\end{lemma}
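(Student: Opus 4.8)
The plan is to reduce the whole statement to a single structural fact: \emph{every alcove is precisely the closure of a bounded two--dimensional cell of the arrangement}, that is, of a connected component of $\mathcal{R}^2 \setminus (l_0 \cup \cdots \cup l_{n-1})$. Granting this, the two geometric inputs I need are immediate. Distinct cells are disjoint, so distinct alcoves have disjoint interiors; and no line $l_i$ meets the interior of a cell, so no $l_i$ meets $\mbox{int}(V)$ for an alcove $V$. I regard establishing this characterisation as the heart of the argument, the remainder being convexity bookkeeping.

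To prove the characterisation, let $V$ be an alcove and let $l_{i_0},\ldots,l_{i_m}$ be the distinct lines carrying the edges of $\delta V$ (distinct by clause (ii)). Since $V$ is compact, convex, with polygonal boundary, it is the intersection $\bigcap_t H_t$ of the closed half--planes $H_t$ bounded by $l_{i_t}$ and containing $V$. Pick $x \in \mbox{int}(V)$ lying on no line of the arrangement, and let $U$ be the component of the complement of all the lines containing $x$. As $U$ is connected and avoids every line, it lies on a single side of each line; since $x \in \mbox{int}(H_t)$ for every $t$, this gives $U \subseteq \bigcap_t \mbox{int}(H_t) \subseteq \mbox{int}(V)$, so $\overline{U} \subseteq V$ is compact and convex. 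Each line meeting $\overline{U}$ in a facet meets it in a single edge, because an arrangement--vertex in the relative interior of such a facet would force its second line to cross into $\mbox{int}(U)$, which is impossible; hence $\overline{U}$ satisfies (i) and (ii). Minimality (iii) then forces $\overline{U} = V$, so $V$ is a closed cell and $\mbox{int}(V) = U$ meets no line.

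With this in hand, take distinct alcoves $V_1,V_2$ and suppose $V_1 \cap V_2 \neq \emptyset$. Their interiors are distinct cells, hence disjoint. If some $x \in V_1 \cap V_2$ lay in $\mbox{int}(V_1)$, then $x \notin \mbox{int}(V_2)$, while $x \in \delta V_2$ would place a bounding line of $V_2$ through $\mbox{int}(V_1)$, contradicting that no arrangement line meets $\mbox{int}(V_1)$; so $x \notin V_2$, absurd. Thus $V_1 \cap V_2 \subseteq \delta V_1 \cap \delta V_2$, which in particular yields the asserted containment in $\delta V_1 \cup \delta V_2$. Now $V_1 \cap V_2$ is convex and sits inside the boundary of the convex polygon $V_1$, whose edges lie on pairwise non--collinear lines; a convex set inside such a boundary cannot span two edges, since the connecting chord would leave the boundary at the intervening corner. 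So it lies in a single edge $e_1$ of $V_1$, and likewise in a single edge $e_2$ of $V_2$. If $V_1 \cap V_2$ is one point, it is either $l \cap l'$ for non--collinear carrying lines or the shared endpoint of two collinear edges, hence in either case an arrangement--vertex. If it has positive length, $e_1$ and $e_2$ overlap in a segment, so they are collinear; edges on a common line are the segments between consecutive vertices, so a positive--length overlap forces $e_1 = e_2$, and then $e_1 \subseteq V_1 \cap V_2 \subseteq e_1$ gives $V_1 \cap V_2 = e_1$, a single edge.

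The main obstacle is the cell characterisation, and within it the verification of clause (ii) for $\overline{U}$: the argument hinges on the \emph{distinct lines} requirement, which is exactly what forbids a bounding segment from being a concatenation of several collinear edges and what forces the minimal configuration to be an honest cell. The supporting facts, that arrangement regions are convex as intersections of half--planes and that no line crosses a cell facet in its relative interior, are routine but should be stated with care; the closing convexity step and the degenerate empty--intersection case are then straightforward.
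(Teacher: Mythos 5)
Your proof is correct, and it takes a genuinely different route from the paper's. The paper argues directly on $V_1\cap V_2$: the intersection clearly satisfies clause (i) of Definition \ref{graph}; if it contained an open set, its boundary would consist of edges (since the lines carrying $\delta V_1$ and $\delta V_2$ meet in vertices), so it would satisfy clause (ii) as well, making it a proper subset of both alcoves satisfying (i) and (ii) and contradicting minimality (iii); hence the intersection has empty interior, and convexity plus connectedness are invoked to finish. You instead first prove a structural characterisation that appears nowhere in the paper --- every alcove is the closure of a bounded cell of the arrangement --- by applying minimality (iii) not to the intersection but to the closure $\overline{U}$ of a cell placed inside the given alcove. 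So both proofs hinge on clause (iii), just applied to different sets. Your route buys several things: distinct alcoves automatically have disjoint interiors; no arrangement line meets an alcove's interior, which yields the sharper containment $V_1\cap V_2\subseteq \delta V_1\cap\delta V_2$ (intersection, not merely union, of the boundaries); and the closing case analysis becomes genuinely elementary, including the point the paper leaves implicit --- why a positive-length intersection must be a \emph{whole} edge rather than a proper sub-segment (your observation that $e_1=e_2$ forces $e_1\subseteq V_1\cap V_2$ settles this, and it also disposes of the single-point case with $e_1=e_2$, which you pass over silently but which your own remark rules out). The characterisation would moreover streamline the later results, e.g. Lemma \ref{two} and the edge-counting induction in Lemma \ref{genus}, since each edge bounds at most two cells. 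The paper's route buys brevity --- no intermediate lemma --- at the price of the two steps it treats sketchily: the verification of clause (ii) for $V_1\cap V_2$, and the final ``convex and connected, hence an edge or a vertex'' step. One shared caveat: both arguments tacitly assume alcoves are two-dimensional, i.e.\ have non-empty interior; Definition \ref{graph} does not literally require this, but it is clearly the intended reading, so this is not a defect of your proof relative to the paper's.
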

\begin{proof}
We clearly have that $(V_{1}\cap V_{2})$ satisfies condition $(i)$ in the definition of an alcove, and the boundary $\delta(V_{1}\cap V_{2})$ is contained in $(\delta V_{1}\cup\delta V_{2})$. Suppose that $(V_{1}\cap V_{2})$ contains an open subset of $\mathcal{R}^{2}$, $(*)$ then the boundary $\delta(V_{1}\cap V_{2})$ is connected, therefore, must consist either of a vertex, or a union of line segments. That the line segments form edges, follows from the fact that the lines forming the boundary $\delta V_{1}$ of $V_{1}$, intersect the lines forming the boundary $\delta V_{2}$ of $V_{2}$, in vertices. It follows that $(V_{1}\cap V_{2})$ also satisfies condition $(ii)$ in the definition of an alcove. As the intersection is a proper subset of both $V_{1}$ and $V_{2}$, this contradicts the assumption that $V_{1}$ and $V_{2}$ are both alcoves. It follows that $(*)$ fails, that is $(V_{1}\cap V_{2})\subset (\delta V_{1}\cup\delta V_{2})$. As $(V_{1}\cap V_{2})$ is connected, being convex, the intersection consists of a vertex or an edge, as required.
\end{proof}

\begin{lemma}
\label{two}
Every edge $e$ lies on the boundary of at most two alcoves.
\end{lemma}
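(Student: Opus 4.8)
The plan is to exploit the supporting line of $e$ together with Lemma \ref{intersection}. Let $l_{0}$ be the line containing the edge $e$; it cuts the affine plane ${\mathcal R}^{2}$ into two open half-planes $H^{+}$ and $H^{-}$. First I would check that any alcove $V$ with $e \subset \delta V$ lies entirely in one of the closed half-planes $\overline{H^{+}}$, $\overline{H^{-}}$. Indeed, $V$ is convex by definition, so if it contained points strictly on both sides of $l_{0}$, then $l_{0}$ would cross the interior of $V$; but then the relative interior of $e$ would be interior to $V$, contradicting $e \subset \delta V$. Thus every alcove bounded by $e$ is assigned to one of the two sides of $l_{0}$, and it suffices to bound the number of alcoves on each side by one.

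The heart of the argument is to show that each side can host at most one such alcove, and here I would argue by contradiction via Lemma \ref{intersection}. Suppose $V_{1}$ and $V_{2}$ are distinct alcoves, both contained in $\overline{H^{+}}$ and both having $e$ as a boundary edge. Choosing a point $p$ in the relative interior of $e$, so that $p$ is not one of the endpoint vertices of $e$, the boundary of each convex polygon $V_{i}$ near $p$ consists of nothing but the segment $e$ lying on $l_{0}$. Consequently there is a small disk $D$ about $p$ with $V_{1} \cap D = \overline{H^{+}} \cap D = V_{2} \cap D$, so that $V_{1} \cap V_{2}$ contains the nonempty open subset $H^{+} \cap D$ of ${\mathcal R}^{2}$. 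This directly contradicts Lemma \ref{intersection}, which forces $(V_{1}\cap V_{2})$ to be a single edge or vertex, and hence to have empty interior. Therefore at most one alcove sits on each side of $l_{0}$, and $e$ bounds at most two alcoves in all.

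I expect the main obstacle to lie in the local claim that $V_{i} \cap D = \overline{H^{+}} \cap D$ near an interior point of $e$. This is the step where one must use, carefully, that $p$ lies in the relative interior of the edge, so that no corner of the polygon interferes, and that the only line of $\delta V_{i}$ passing through $p$ is $l_{0}$, the remaining boundary segments of $V_{i}$ being bounded away from $p$. Once this local normal form is pinned down, the appeal to Lemma \ref{intersection} is immediate, and the half-plane dichotomy of the first paragraph completes the count.
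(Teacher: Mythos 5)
Your proof is correct and takes essentially the same route as the paper: both arguments reduce to Lemma \ref{intersection} by exhibiting a nonempty open subset of ${\mathcal R}^{2}$ inside the intersection of two distinct alcoves bounded by $e$, via a local analysis at an interior point of $e$. If anything, your write-up is the more careful one --- the paper dismisses the key local step ($x$ being interior to $V_{1}\cup V_{2}$, which tacitly assumes the two alcoves sit on opposite sides of the line through $e$) as ``easily checked,'' whereas you make the half-plane dichotomy and the normal form $V_{i}\cap D=\overline{H^{+}}\cap D$ explicit, which also handles the same-side case directly.
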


\begin{proof}
Suppose that $e$ lies on the boundary of three distinct alcoves $\{V_{1},V_{2},V_{3}\}$. It is easily checked, that, if $x$ is an interior point of the edge $e$, then $x$ is an interior point of the union of alcoves $(V_{1}\cup V_{2})$. It follows that the intersection of $V_{3}$ with either $V_{1}$ or $V_{2}$, must contain an open subset of $\mathcal{R}^{2}$. This contradicts the previous result.
\end{proof}

\begin{lemma}
\label{genus}
If $n\geq 3$, there exist ${(n-1)(n-2)\over 2}$ alcoves, associated to the graph of Definition \ref{graph}.

\end{lemma}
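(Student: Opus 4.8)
The plan is to reduce the statement to the classical count of bounded cells of a line arrangement, and to carry out that count by the standard device of introducing the lines one at a time. Write $\mathcal{A}$ for the subdivision of $\mathcal{R}^{2}$ cut out by the $n$ lines $\{l_{0},\ldots,l_{n-1}\}$ in bounded general position, and let a \emph{cell} be a connected component of $\mathcal{R}^{2}\setminus\bigcup_{j}l_{j}$. My first step would be to identify the alcoves with the bounded cells of $\mathcal{A}$. Each bounded cell is convex, being an intersection of open half-planes; its closure is compact because all intersections are finite (this is exactly the bounded-position hypothesis); and its boundary is a union of edges lying on distinct lines. Since no line crosses its interior, it contains no proper subset meeting conditions $(i)$ and $(ii)$ of Definition \ref{graph}, so it is an alcove by the reasoning of Lemma \ref{convex} and Remark \ref{polygon}. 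Conversely, condition $(iii)$ forces an alcove to contain no vertex in its interior, so its interior lies in a single cell and the alcove is the closure of one bounded cell. Lemmas \ref{boundary}, \ref{intersection} and \ref{two} are precisely what guarantee that these cells fit together as an honest polygonal subdivision, each edge bordering one or two of them, so that nothing is double-counted or omitted.

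It then remains to count the bounded cells of $\mathcal{A}$, and here I would record two facts, each proved by adding the lines successively. The $i$-th line meets the previous $i-1$ lines in $i-1$ distinct points and is thereby cut into $i$ arcs, each of which splits one existing region in two; summing, the total number of regions is $1+\sum_{i=1}^{n}i=1+n+\frac{n(n-1)}{2}$, i.e. $1+n+C^{n}_{2}$. On the other hand, bounded position forces the $n$ lines to have $n$ distinct directions, so the $2n$ rays running off to infinity point in $2n$ distinct directions and cut the circle at infinity into $2n$ arcs, each arc corresponding to exactly one unbounded region; hence there are $2n$ unbounded regions.

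Subtracting, the number of bounded cells, and therefore the number of alcoves, is $1+n+\frac{n(n-1)}{2}-2n=\frac{n^{2}-3n+2}{2}=\frac{(n-1)(n-2)}{2}$, as claimed. One may instead phrase the whole argument as an induction: the base case $n=3$ gives the single triangle, and since passing from $n-1$ to $n$ lines adds $n$ regions but only $2$ unbounded ones, it adds exactly $n-2$ alcoves, so the alcove number $b_{n}$ satisfies $b_{n}=b_{n-1}+(n-2)$ with $b_{3}=1$, which solves to the same formula. Alternatively, the count can be made fully rigorous through the one-point compactification of $\mathcal{R}^{2}$ to $S^{2}$, on which each line becomes a circle through the added point; here Euler's formula $V-E+F=2$ with $V=C^{n}_{2}+1$ and $E=n^{2}$ gives $F=1+\frac{n(n+1)}{2}$ faces in total, and removing the $2n$ faces incident to the point at infinity recovers $\frac{(n-1)(n-2)}{2}$.

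The arithmetic is routine; the genuine work lies in the first paragraph, where one must verify that every alcove is a single bounded cell and that no bounded cell is missed, which is the combined content of Lemmas \ref{convex}, \ref{boundary}, \ref{intersection} and \ref{two}. The second delicate point is the value $2n$ for the number of unbounded regions: this genuinely uses bounded position, since otherwise a parallel class would meet on the line at infinity (as in Lemma \ref{intersections}) and both the region total and the unbounded count would be altered. Once these two points are secured, the theorem follows by the elementary subtraction above.
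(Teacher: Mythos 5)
Your proof is correct, but it takes a genuinely different route from the paper's. The paper argues by induction directly on alcoves: adding a new line to $n$ lines in bounded general position creates $n-1$ new edges, and the heart of its proof is the equivalence that a new edge bounds two alcoves of the enlarged arrangement if and only if it crosses the interior of an alcove of the old one; combined with Lemmas \ref{boundary}, \ref{intersection} and \ref{two}, this shows each new edge contributes exactly one new alcove, giving the recurrence $b_{n+1}=b_{n}+(n-1)$. You instead reduce everything to the classical count of bounded cells of an arrangement: identify alcoves with closures of bounded cells, count all regions ($1+n+C^{n}_{2}$), count unbounded regions ($2n$, using that bounded position forces distinct directions), and subtract. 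What your route buys is a clean separation of the geometric identification from a purely combinatorial count that can be finished three standard ways (successive insertion, the recurrence $b_{n}=b_{n-1}+(n-2)$, or Euler's formula on $S^{2}$), and it avoids the paper's delicate analysis of how a new edge splits or creates alcoves; indeed, once the bijection with bounded cells is in place, the Lemmas \ref{boundary}, \ref{intersection}, \ref{two} that you cite as preventing double counting are not actually needed. What it costs is that the identification, which the paper never has to make explicit, carries the real burden, and there is one imprecision in your converse direction: condition (iii) forbids any \emph{line} from crossing the alcove's interior, not merely any vertex from lying in it — a line can cross the interior as a single edge with no interior vertex. The repair is the same splitting argument you gesture at: such a chord has vertices of the arrangement as endpoints and is a union of edges, so it would cut the alcove into two sets satisfying conditions (i) and (ii), contradicting (iii); then the interior, being connected and disjoint from every line, lies in a single bounded cell, and the alcove is that cell's closure. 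With that phrasing corrected, your argument is complete and gives the formula ${(n-1)(n-2)\over 2}$ exactly as claimed.
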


\begin{proof}
When $n=3$, it is easily checked that there is $1$ alcove, formed by the vertices $\{v_{1},v_{2},v_{3}\}$ of a triangle, obtained from the intersection of three lines $\{l_{1},l_{2},l_{3}\}$ in bounded general position. We assume, inductively, that the result is true for $n$ lines in bounded general position. Let $l_{n+1}$ be a new line, added to $n$ lines $\{l_{1},\ldots,l_{j},\ldots,l_{n}\}$ in bounded general position. This introduces $n$ new vertices $\{v_{1},\ldots,v_{n}\}$ and $(n-1)$ new edges, corresponding to line segments $e_{j}$ between the vertices $v_{j}$ and $v_{j+1}$. We claim that an edge $e_{j}$ is on the boundary of two alcoves in the graph $G_{n+1}$ of $(n+1)$ lines, $(*)$, if and only if it passes through the interior of an alcove in the graph $G_{n}$ of $n$ lines, $(**)$. For assume that $(*)$ holds, and $e_{j}$ lies on the boundary of two alcoves $V_{1}$ and $V_{2}$. By Lemma 0.10, $e_{j}=(V_{1}\cap V_{2})$. Let $\{e_{j},f_{1},\ldots,f_{r}\}$ and $\{e_{j},g_{1},\ldots,g_{s}\}$ enumerate the consecutive edges of the alcoves $V_{1}$ and $V_{2}$ respectively. By the definition of lines in general position, the edges $\{f_{r},g_{1}\}$ and $\{f_{1},g_{s}\}$ belong to the same lines $l_{1}$ and $l_{2}$ respectively, in particular, either the polygon defining the boundary $\delta V_{1}$ or the polygon defining the boundary
$\delta V_{2}$ is inscribed within the triangles, having either edges $\{e_{j},f_{1},f_{r}\}$ or $\{e_{j},g_{1},g_{s}\}$. In either case, it follows that the union of alcoves $(V_{1}\cup V_{2})$ is convex. We now relabel the boundary $\delta(V_{1}\cup V_{2})$, after removing the edge $e_{j}$, consecutively, as $\{h_{1},f_{2},\ldots,f_{r-1},h_{2},g_{2},\ldots,g_{s-1}\}$, where $h_{1}$ are $h_{2}$ are the new edges in the graph $G_{n}$, obtained by joining the edges $\{f_{r},g_{1}\}$ and $\{f_{1},g_{s}\}$ in the graph $G_{n+1}$. As all the faces are edges, belonging to distinct lines, from the above, it follows that $(V_{1}\cup V_{2})$ is an alcove, by Lemma 0.5, hence $(**)$ follows. Conversely, assume that $(**)$ holds, and $e_{j}$ passes through the interior of an alcove $V$ in $G_{n}$. By the definition of an alcove, the boundary $\delta V$ consists of a union of edges $\{k_{1},\ldots,k_{t}\}$, arranged in a convex polygon, belonging to distinct lines. By the definition of lines in general position, the edge $e_{j}$ passes through the interior of two of these edges, say $k_{1}$ and $k_{l}$, where $1<l\leq t$, and forms two new pairs of edges $\{k_{11},k_{12}\}$ and $\{k_{l1},k_{l2}\}$ in the graph $G_{n+1}$. We let $V_{1}$ be the region, bounded consecutively by the edges $\{k_{11},\ldots,k_{l-1},k_{l1},e_{j}\}$, and $V_{2}$ the region, bounded consecutively by the edges $\{k_{l2},k_{l+1},\ldots,k_{t},k_{12},e_{j}\}$. One of the regions is bounded, by the triangle with edges $\{e_{j},l_{1},l_{l}\}$, where $l_{1}$ and $l_{l}$ are the lines containing the edges $k_{1}$ and $k_{l}$ respectively. It follows, that both regions are convex, and, therefore, alcoves, by Lemma 0.5. Hence, $(*)$ is shown. Now, if $l_{n+1}$ is a new line, each of the $(n-1)$ new edges, either passes through the interior of an alcove in $G_{n}$, in which case, by the above $(*)(**)$, and Lemma 0.11, one extra alcove is introduced into the graph $G_{n+1}$, or, does not pass, through an interior, in which case, by the above $(*)(**)$, and Lemma 0.9, an extra alcove is also introduced into the graph $G_{n+1}$. In total, $(n-1)$ new alcoves are introduced, which implies that the total number of alcoves in $G_{n+1}$ is;\\

${(n-1)(n-2)\over 2}+(n-1)={n(n-1)\over 2}={([n+1]-1)([n+1]-2)\over 2}$\\

This implies the result, by induction.

\end{proof}

\begin{rmk}
\label{infinity}
We return to the notation of Lemma \ref{equal}, and the following remark. For $n$ odd, we can apply the previous lemma, to obtain that there exist $C^{n-1}_{2}$ alcoves associated to a regular bounded arrangement of lines. One may also extend the above definition of an alcove to regions in the real projective plane ${\mathcal R}P^{2}$, by, for example, assuming that all the intersections are in finite position. This may always be achieved by an appropriate choice of the line at $\infty$, so as not to include any of the vertices. With the convention that any two such regions intersecting in a vertex, on the line at $\infty$, are counted as a single alcove, the reader is invited to check that there are again $C^{n-1}_{2}$ alcoves, associated to a set of lines in general position. The reason for this
\end{rmk}

We can give a convenient description of the alcoves associated to a regular bounded line arrangement;\\

\begin{lemma}
\label{positions}
In the situation of Lemma \ref{equal}, and Lemma \ref{intersections}, the alcoves are defined by;\\

(i). For $n\geq 3$, the central alcove, with boundary defined by the $n$-polygon, inscribed in the unit circle.\\

(ii). For $n\geq 5$, $n$ peripheral alcoves of the first kind, inscribed between the unit and first concentric circle, with boundaries defined by the triangles, formed by the lines $\{l_{i},l_{i+1},l_{i+2}\}$, mod $(n)$.\\

(iii). For $n\geq 7$, $n$ peripheral alcoves of the second kind, inscribed between the
$(j-1,j,j+1)$ concentric circles, with boundaries defined by the quadrilaterals, formed by the lines\\

$\{l_{i},l_{i+1},l_{i+2j},l_{i+2j+1}\}$, mod $(n)$, $1\leq j\leq ({n-5\over 2})$\\

\end{lemma}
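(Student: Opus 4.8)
The plan is to exhibit the claimed cells explicitly, certify each as a genuine alcove through Lemma \ref{convex} (equivalently Remark \ref{polygon}), and then close the argument by matching cardinalities against Remark \ref{infinity}, rather than by an exhaustive proof that no other alcove can arise. First I would record the coordinate data supplied by Lemma \ref{equal}: for $n$ odd the intersection $l_j\cap l_{j+k}$ sits on the concentric circle of radius $r_k=\sin(\tfrac{\pi}{2}(1-\tfrac2n))/\sin(\tfrac{\pi}{2}(1-\tfrac{2k}{n}))$, these radii are strictly increasing in $k$ for $1\le k\le\tfrac{n-1}{2}$ with $r_1=1$, and within each circle the $n$ points are equally spaced at angular gaps $2\pi/n$. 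In particular the $\binom n2$ intersection points are pairwise distinct, so the arrangement is in bounded general position and the total alcove count is $\binom{n-1}{2}$ by Remark \ref{infinity} via Lemma \ref{genus}. I would also note at the outset that the whole configuration is invariant under rotation by $2\pi/n$, which sends $l_j\mapsto l_{j+1}$ and hence carries alcoves to alcoves; this reduces every verification below to one representative of each rotation-orbit.

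Next I would check that each listed region is an alcove. For (i) the central region is the original regular $n$-gon on the unit circle, manifestly convex, with vertices $l_j\cap l_{j+1}$ on distinct lines; since every other intersection lies on a strictly larger circle, its vertex number is $0$ and it is an alcove by Remark \ref{polygon}. For (ii) the representative is the triangle cut out by $\{l_0,l_1,l_2\}$, with vertices $l_0\cap l_1,\ l_1\cap l_2$ on the unit circle and $l_0\cap l_2$ on the circle $r_2$; convexity is automatic for a triangle, and the radius/angle data force vertex number $0$. For (iii), with representative $\{l_0,l_1,l_{2j},l_{2j+1}\}$ in this boundary order, the four vertices are $l_0\cap l_1$ and $l_{2j}\cap l_{2j+1}$ on the unit circle, together with $l_1\cap l_{2j}$ and $l_{2j+1}\cap l_0$ on the two concentric circles whose radii are read off from Lemma \ref{equal} after reducing the index differences $2j-1$ and $2j+1$ modulo $n$ into the range $1\le k\le\tfrac{n-1}{2}$; here I would use the explicit angular positions to confirm that the quadrilateral is convex, that its four sides are genuine edges, and that the annulus it occupies contains no stray vertex in its interior or on a side. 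In each case Lemma \ref{convex} certifies an alcove, and rotation then produces the full families of $1$, $n$, and $n$ cells respectively.

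Finally I would count. The listed alcoves number $1+n+n\cdot\tfrac{n-5}{2}=\tfrac{n^2-3n+2}{2}=\binom{n-1}{2}$, and they are pairwise distinct: the three kinds have different shapes and occupy different radial bands, the $n$ members of each family sit at distinct angular positions, and formally two distinct alcoves can meet only in an edge or a vertex by Lemma \ref{intersection}. Since this total equals the alcove count from the first step, the list is exhaustive, which is the assertion of the lemma.

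The hard part will be the vertex-number-$0$ verification for the second-kind quadrilaterals: as $j$ grows these cells stretch across the widening annuli between the two outer circles, so one must argue carefully, from the monotonicity of $r_k$ together with the uniform $2\pi/n$ spacing, both that no intersection point slips into the interior or onto a side and that each bounding segment remains a single edge. The even-$n$ case is analogous, now invoking Lemma \ref{intersections} to treat the parallel pairs at infinity and the projective counting convention of Remark \ref{infinity}.
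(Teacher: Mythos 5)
Your overall strategy is exactly the paper's own proof: the printed argument consists of nothing but the counting identity $1+n+n\cdot\frac{(n-5)}{2}=\frac{(n-1)(n-2)}{2}$ measured against Lemma \ref{genus}, with the certification of the individual cells left to the reader, and your preliminaries (strict monotonicity of the radii $r_{k}$, hence bounded general position; rotational symmetry) are sound. The gap is in the one step that carries the real content: your identification of the quadrilaterals in (iii) is wrong, so the certification via Lemma \ref{convex} and Remark \ref{polygon} that your argument promises cannot be carried out. Take $j=1$ and the lines $\{l_{0},l_{1},l_{2},l_{3}\}$. Your four vertices are $p_{1}=l_{0}\cap l_{1}$, $p_{2}=l_{1}\cap l_{2}$, $p_{3}=l_{2}\cap l_{3}$ and $Q=l_{0}\cap l_{3}$; since $p_{1}p_{2}$ and $p_{2}p_{3}$ are edges of the central polygon and your region lies outside that polygon, its interior angle at $p_{2}$ is $2\pi-\frac{(n-2)\pi}{n}=\frac{(n+2)\pi}{n}>\pi$. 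This quadrilateral is a non-convex dart, and in fact it is the union of three alcoves: the two first-kind triangles attached to the polygon edges $p_{1}p_{2}$ and $p_{2}p_{3}$, together with the true second-kind cell. The alcove actually bounded by these four lines is the kite obtained by discarding the opposite vertex pair $\{l_{0}\cap l_{1},\,l_{2}\cap l_{3}\}$, i.e. the convex quadrilateral with vertices $l_{1}\cap l_{2}$ (unit circle), $l_{0}\cap l_{2}$ and $l_{1}\cap l_{3}$ (index difference $2$), and $l_{0}\cap l_{3}$ (index difference $3$). For $j\geq 2$ your cells fail differently but just as badly: for $n=9$, $j=2$, your vertices $p_{1}$, $l_{1}\cap l_{4}$, $p_{5}$, $l_{0}\cap l_{5}$ do form a convex quadrilateral, but $l_{1}\cap l_{4}$ and $l_{0}\cap l_{5}$ lie on opposite rays from the origin, so the diagonal joining them passes through the origin and the region contains the entire central polygon; its vertex number is far from $0$. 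Since your exhaustiveness conclusion counts precisely the cells you claim to have certified, and those cells are not alcoves, the proof collapses at this point.

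A careful execution of your plan would also have exposed a discrepancy in the statement itself: for $j\geq 2$ the lines $\{l_{i},l_{i+1},l_{i+2j},l_{i+2j+1}\}$ bound no alcove whatsoever. For $n=9$ the nine outermost alcoves are bounded by $\{l_{i},l_{i+1},l_{i+3},l_{i+4}\}$, not $\{l_{i},l_{i+1},l_{i+4},l_{i+5}\}$; the correct general recipe for the second-kind cells is $\{l_{i},l_{i+1},l_{i+j+1},l_{i+j+2}\}$, with vertices $l_{i+1}\cap l_{i+j+1}$, $l_{i}\cap l_{i+j+1}$, $l_{i+1}\cap l_{i+j+2}$, $l_{i}\cap l_{i+j+2}$ lying on the circles of index difference $j$, $j+1$, $j+1$, $j+2$ respectively. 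This agrees with the printed formula only at $j=1$, where $2j=j+1$. Note that the counting identity is blind to all of this, since either recipe yields $n$ cells for each $j$; that is exactly why the convexity and vertex-number-$0$ verifications are the genuine mathematical content of this lemma, and why neither your proposal as written nor the paper's one-line proof is complete without them.
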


\begin{proof}
The proof is left to the reader, one should observe that the total number of alcoves is correct, as;\\

$1+n+n.{(n-5)\over 2}=1+n.{(n-3)\over 2}={(n-1)(n-2)\over 2}$

\end{proof}

\begin{rmk}
\label{class}
Observe that, for a nonsingular plane projective curve $C\subset P^{2}(\mathcal{C})$ of degree $n$, if $m$ is the class of $C$, then;\\

$m=n(n-1)=2n+2n+2({n(n-5)\over 2})$ $(*)$\\

Under certain further constraints on $C$, we can construct a $1$-parameter family $\{C_{t}:t\in Par_{t}\}$, with $C_{0}=C$, and $C_{\infty}$ consisting of $n$ lines $\{l_{1},\ldots,l_{n}\}$ in general position, with intersections described by the configurations in Lemmas 0.1 and Lemma 0.3, such that for each of the intersections $l_{i}\cap l_{j}$, $1\leq i<j\leq n$, there exist \emph{exactly} $2$ vertical tangents specialising to $l_{i}\cap l_{j}$, $(**)$. Using $(*)$ and Lemma 0.14, this suggests that the class points are uniformly distributed in three parts, across the periphery of the central alcove, the $n$ peripheral alcoves of the first kind, and the ${n(n-5)\over 2}$ peripheral alcoves of the second kind. The proof of $(**)$ will be the subject of the next section.

\end{rmk}
\end{section}

\begin{section}{Harmonic Variations}

\begin{rmk}
\label{pfs}
We observe some consequences of the degree-genus formula, Theorem 3.36 of \cite{dep1}, assuming Severi's conjecture, (\footnote{With the extra condition that the degeneration is asymptotic}, see \cite{dep2}), that, for any plane projective algebraic curve $C$, of degree $n$, having at most nodes as singularities, there exists an asymptotic family, see \cite{dep2}, $\{C_{t}:t\in P^{1}\}$, with the property that $C_{0}=C$ and $C_{\infty}$ is a union of $n$ lines in general position.
\end{rmk}

\begin{defn}
\label{harmonic1}

Let $\{l_{1},\ldots,l_{n}\}\subset P^{2}(\mathcal{R})$ be a sequence of $n$ projective lines, with coordinates $(x,y)$. We say that $\{l_{1},\ldots,l_{n}\}$ forms a harmonic arrangement if they satisfy the conditions of Lemma \ref{equal}, in the case that $n$ is odd, and, if, the intersections are in finite position, and satisfy the conditions of Lemma \ref{equal}, after a linear change of variables.

\end{defn}

\begin{defn}
\label{harmonic2}
Let $\{l_{1},\ldots,l_{n}\}\subset P^{2}(\mathcal{C})$ be a sequence of $n$ projective lines, defined over $\mathcal{R}$ and let $i:P^{2}(\mathcal{R})\rightarrow P^{2}(\mathcal{C})$ be the canonical inclusion. We say that $\{l_{1},\ldots,l_{n}\}$, forms a harmonic arrangement, if the pullbacks $\{i^{*}(l_{1}),\ldots,i^{*}(l_{n})\}$ form a harmonic arrangement in the sense of Definition \ref{harmonic1}.
\end{defn}

\begin{defn}
\label{harmonic3}
Let $C$ be a nonsingular plane projective curve of degree $n$. We say that $C$ is harmonic if there exist $n$ lines, $\{l_{1},\ldots,l_{n}\}$, which are bitangent to $C$, (\footnote{In the sense that there exist exactly $2$ points $\{p_{1,j},p_{2,j}\}$, on each $l_{j}$, such that $I_{p_{i,j}}(C,l_{j})=2$, for $1\leq i\leq 2$, and no further points of higher multiplicity}), which form a harmonic arrangement, in the sense of Definition \ref{harmonic2}, and such that, there exists lines $l_{a}$ and $l_{b}$, with $(l_{a}\cap C)=\{p_{1,j}:1\leq j\leq n\}$, and $(l_{b}\cap C)=\{p_{2,j}:1\leq j\leq n\}$.

\end{defn}

\begin{defn}
\label{harmonic4}
Let $C$ be a harmonic curve of degree $n$, in the sense of Definition \label{harmonic3}. Let $\{C_{t}:t\in Par_{t}\}$ be a $1$-dimensional family of  nonsingular plane projective curves, (\footnote{In the sense that $Par_{t}\subset P^{{(n+1)(n+2)\over 2}}$ is a $1$-dimensional irreducible algebraic variety, containing the nonsingular curve $C$}). We say that the family is a harmonic variation, if, there exist $\{0,\infty\}\subset Par_{t}$, $C_{0}=C$, $C_{\infty}$ is a union of lines $\{l_{1},\ldots,l_{n}\}$, forming a harmonic arrangement, and $Par_{t}\subset W^{4n}$, where, $W^{4n}$ is defined as;\\

$\{C_{\bar a}:\noindent \bigwedge_{1\leq j\leq n}C_{\bar a}(p_{1,j}),C_{\bar a}(p_{2,j}),I_{p_{1,j}}(C_{\bar a},l_{j})=2,I_{p_{2,j}}(C_{\bar a},l_{j})=2 \}$\\
\end{defn}

\begin{rmk}
\label{harmonic5}
For a given harmonic variation, we can choose a coordinate system $(x',y')$ such that the lines $\{l_{a},l_{b}\}$ correspond to $\{x=0,x=1\}$, the intersection $(l_{a}\cap l_{b})=[0:1:0]$, and the intersections $l_{i}\cap l_{j}$ are in finite position, for $1\leq i<j\leq n$. We can keep track of the original configuration of lines, in $(x,y)$, from Definitions \ref{harmonic2}, \ref{harmonic3}, through a linear isomorphism $L:P^{1}(\mathcal{C})\rightarrow P^{1}(\mathcal{C})$.
\end{rmk}

\begin{lemma}
\label{harmonic6}
For any given plane nonsingular curve $C$ of degree $n$, there exists a finite sequence $\{C_{i}:0\leq i\leq r\}$ of nonsingular plane curves of degree $n$, linear systems $\{L_{i,i+1}\subset P^{{(n+1)(n+2)\over 2}}:0\leq i\leq r-1\}$, and parameters $\{a_{0},a_{h}\}\cup\{a_{1,i},a_{2,i}:1\leq i\leq r-1\}$, with $C_{i}\sim a_{1,i}$, in $L_{i-1,i}$, and $C_{i}\sim a_{2,i}$, in $L_{i,i+1}$, $C\sim a_{0}$ in $L_{0,1}$, $C\sim a_{h}$ in $L_{r-1,r}$, such that $C_{r}$ is harmonic.
\end{lemma}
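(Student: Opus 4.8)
The plan is to split the statement into two logically independent pieces: the bare \emph{existence} of a single nonsingular harmonic curve of degree $n$, and a \emph{connectivity} assertion that any nonsingular $C$ can be tied to it through a chain of linear systems. Throughout I work over $\mathcal{C}$ in the space $P^{(n+1)(n+2)/2}$ parametrising plane curves of degree $n$, writing $\Delta$ for the discriminant hypersurface, so that the nonsingular curves form the complement $U=P^{(n+1)(n+2)/2}\setminus\Delta$. Since $\Delta$ is a hypersurface in an irreducible projective space, $U$ is irreducible and, crucially, connected in the complex topology; this is the engine of the argument and the reason one works over $\mathcal{C}$ rather than $\mathcal{R}$, where the nonsingular locus can fall into several components.

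For the connectivity step, note that two nonsingular curves $C',C''\in U$ always span a pencil, the line $L\cong P^{1}\subset P^{(n+1)(n+2)/2}$ through the points $[F'],[F'']$ given by their equations; because $L$ contains the nonsingular members $C'$ and $C''$, it is not contained in $\Delta$, and both curves occur as nonsingular members of $L$. I would then take a path $\gamma$ in $U$ from $C$ to a harmonic curve $C_{r}$ (possible as $U$ is path-connected), cover its compact image by finitely many balls contained in $U$, and choose nonsingular breakpoints $C_{1},\ldots,C_{r-1}$ along $\gamma$ in successive overlaps. Letting $L_{i,i+1}$ be the pencil spanned by $C_{i}$ and $C_{i+1}$ produces the required sequence: consecutive pencils share the nonsingular curve $C_{i+1}$, and the parameters $a_{1,i},a_{2,i}$ record its position in the two pencils $L_{i-1,i}$ and $L_{i,i+1}$. (A single pencil through $C$ and $C_{r}$ already meets the bare statement with $r=1$; the longer chain is what one wants if, in addition, the intermediate curves are to acquire the bitangent lines of the arrangement one at a time, remaining inside the bitangency loci $W^{4n}$ of Definition~\ref{harmonic4}.)

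The substantive ingredient is the existence of the endpoint $C_{r}$. The plan is to reverse the degeneration of Remark~\ref{pfs}: begin with the union $C_{\infty}=l_{1}\cup\cdots\cup l_{n}$ of a harmonic line arrangement supplied by Lemma~\ref{equal}, and apply the asymptotic smoothing of \cite{dep2} to obtain nonsingular curves $C_{t}$ specialising to $C_{\infty}$. One then imposes, inside the smoothing family, the conditions of Definition~\ref{harmonic3}: that each harmonic line $l_{j}$ be bitangent to $C_{t}$, and that the two resulting families of tangency points $\{p_{1,j}\}$ and $\{p_{2,j}\}$ each be collinear, lying on auxiliary lines $l_{a}$ and $l_{b}$. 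Using the normalisation of Remark~\ref{harmonic5}, with $l_{a},l_{b}$ carried to $\{x=0\}$ and $\{x=1\}$, these become explicit algebraic conditions on the coefficients of $C_{t}$, and the task is to exhibit one nonsingular solution.

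The hard part will be exactly this existence step. Requiring that all $n$ harmonic lines be simultaneously bitangent, with the $2n$ tangency points confined to only two further lines, is a strong and overdetermined-looking system that the generic member of the asymptotic family will not satisfy. The crux is a dimension count balancing the number of bitangency-and-collinearity conditions against the dimension of the system $P^{(n+1)(n+2)/2}$ and showing the solution locus is nonempty; this is precisely where the internal symmetry of the harmonic arrangement, its alcove decomposition (Lemma~\ref{genus}) and the tripartite class decomposition $(*)$ of Remark~\ref{class}, is expected to make the count come out even. Granting the existence of $C_{r}$, the connectivity construction of the second paragraph closes the argument, the finiteness of the chain length $r$ following from the compactness of the connecting path.
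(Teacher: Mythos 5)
Your proposal reduces the lemma to two pieces and then solves only the easy one. The connectivity step is essentially trivial once a harmonic curve $C_{r}$ exists (as you note yourself, any two points of $P^{(n+1)(n+2)\over 2}$ span a pencil, so $r=1$ already works for the bare statement), and your path-covering refinement, while correct, adds nothing that the statement demands. The entire content of the lemma, and the entirety of the paper's long proof, is the piece you explicitly defer: the \emph{existence} of a nonsingular curve of degree $n$ which is harmonic in the sense of Definition \ref{harmonic3}. Writing ``granting the existence of $C_{r}$'' concedes exactly the point to be proved, so as it stands the proposal has a genuine gap rather than a complete alternative argument.

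The gap is not merely formal, because the dimension count you gesture at does not by itself close it. The conditions of bitangency to $n$ fixed harmonic lines $\{l_{1},\ldots,l_{n}\}$ with tangency points at the prescribed intersections with $l_{a}$ and $l_{b}$ cut out a \emph{linear} subspace of codimension at most $4n$ (the space $W^{4n}$ of Definition \ref{harmonic4}), which is nonempty since it contains the totally degenerate member $C_{lines}=\bigcup_{j}l_{j}$; for $n\geq 5$ its dimension is positive. The real difficulty is showing that this linear space contains a member that is \emph{nonsingular}, with intersection multiplicity exactly $2$ (not higher) at each tangency point and no unwanted extra tangencies --- a genericity statement inside a linear system all of whose members pass doubly through $2n$ prescribed points. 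This is precisely what the paper labours over: its proof never separates existence from connectivity, but instead \emph{constructs} $C_{r}$ by the chain of pencils itself, starting from the given $C$ and deforming step by step --- first reducing all $k$-tangents, $k\geq 3$, to honest bitangents via the covers $V_{k}$ and nonstandard perturbation arguments, then forcing the $n$ chosen bitangents into general position, then sliding the tangency points onto two auxiliary lines $l_{a},l_{b}$ using Severi varieties $V_{d}^{t,s}$ of nodal-cuspidal dual curves, duality, and a lengthy analysis of fixed singularities in reducible pencils through $C_{lines}$. So the chain $\{L_{i,i+1}\}$ in the statement is not an afterthought bolted onto an existence result, as in your plan; it is the mechanism by which existence is proved. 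To repair your proposal you would need to carry out, for the smoothing family through the harmonic arrangement, the same kind of genericity analysis (nonsingularity and exactness of the bitangencies for a generic member of $W^{4n}$ through $C_{lines}$) that the paper performs in its footnotes; without it, the central claim remains unproven.
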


\begin{proof}
For a $1$-dimensional (generically nonsingular) family of curves, let $V_{k}\subset Par_{t}\times P^{2}$ be defined by;\\

$V_{k}=\overline{\{(t,l):\exists_{\geq k,x}x\in (l\cap C_{t})\wedge I_{x}(l,C_{t})\geq 2\}}$\\

We have that for $k\geq 2$, $V_{k+1}\subseteq V_{k}$, and, by $(***)$ in footnote \ref{complex}, each $V_{k}$ is a finite cover of $Par_{t}$, of degree at most ${n(n-2)\over 2}$. Now, given $C$, a nonsingular curve of degree $n$, we use the following method to reduce the $k$-tangents, for $k\geq 3$, to bitangents. $(****)$, Enumerate the $k$-tangent lines, for $k\geq 3$, as $\{l_{k,1},\ldots,l_{k,s(k)}\}$, and the bitangents as $\{l'_{1},\ldots,l'_{r}\}$. Pick $3$ points $\{p_{1},p_{2},p_{3}\}$ on $l_{3,1}$, centred at $\{(a,b),(a',b'),(a'',b'')\}$, with tangent lines $l_{p_{1}}=l_{p_{2}}=l_{p_{3}}=l_{1}$, defined by $cx+dy-(ca+db)=0$. Let $W^{4},W^{6}\subset P^{{(n+1)(n+2)\over 2}}$ be the hyperplanes, defined by;\\

\noindent $W^{4}=\{\overline{a}:g(\overline{a},a,b)=g(\overline{a},a',b')=0,d{\partial g\over\partial x}|_{(\overline{a},a,b)}-c{\partial g\over\partial y}|_{(\overline{a},a,b)}$\\

\indent \ \ $=0, d{\partial g\over\partial x}|_{(\overline{a},a',b')}-c{\partial g\over\partial y}|_{(\overline{a},a',b')}=0 \}$\\

\noindent $W^{6}=\{\overline{a}:g(\overline{a},a,b)=g(\overline{a},a',b')=g(\overline{a},a'',b'')=0,d{\partial g\over\partial x}|_{(\overline{a},a,b)}-c{\partial g\over\partial y}|_{(\overline{a},a,b)}$\\

\indent \ \ $=0, d{\partial g\over\partial x}|_{(\overline{a},a',b')}-c{\partial g\over\partial y}|_{(\overline{a},a',b')}=0, d{\partial g\over\partial x}|_{(\overline{a},a'',b'')}-c{\partial g\over\partial y}|_{(\overline{a},a'',b'')}=0\}$\\

where $g(\overline{a},x,y)=\sum_{i+j\leq n}a_{ij}x^{i}y^{j}$. We have that $codim(W^{4})=4$, $codim(W^{6})=6$, $W^{6}\subset W^{4}$, and  $\overline{a_{0}}\in W^{6}$, where $C=C_{\overline{a_{0}}}$. Choose a line $l''\subset W^{4}$, with $l''\cap W^{6}=\overline{a_{0}}$. Then $l''$ defines a $1$-parameter family of (generically nonsingular) curves $\{C_{t}:t\in l''\}$, of degree $n$.\\

Let $m$ denote the degree of the cover $V_{2}/l''$, and let $U\subset l''$ have the property that $Card(V_{2}(t))=m$, for $t\in U$. Let $D\subset U\times P^{1}\times P^{2}$ denote the family of curves defined by $D_{(t,s)}=\prod_{(l,t)\in V^{2}(t)}(y+l_{1}x+(l_{2}+s)$, with corresponding closure $\overline{D}\subset l''\times P^{1}\times P^{2}$. Let $W\subset l''\times P^{1}\times P^{2}$ be defined by;\\

$W(t,s,p)\equiv p\in C_{t}\cap \overline{D}(t,s)$\\

Then, using Bezout's theorem, $W$ defines a finite cover of $l''\times P^{1}$ of degree $mn$.

Using factoring multiplicity, see Lemma 2.6 of \cite{dep5}, for $(t_{0},0,p_{0})\in W$, we have that;\\

$Mult_{(t_{0},0,p_{0})}(W/(l''\times P^{1}))$\\

$=\sum_{s\in\mathcal{V}_{0}\ generic,p_{0,i}\in (W(t_{0},s)\cap\mathcal{V}_{p_{0}})}Mult_{(t_{0},s,p_{0,i})}(W(s)/l'')$ $(\dag)$\\

$=\sum_{t\in\mathcal{V}_{t_{0}}\ generic,q_{0,j}\in (W(t,0)\cap\mathcal{V}_{p_{0}})}Mult_{(t,0,q_{0,j})}(W(t)/P^{1})$ $(\dag\dag)$\\

If $p_{0}\in l_{0}$, we have that $(\dag)=I_{p_{0}}(C_{t_{0}},l_{0})Mult_{(t_{0},l_{0})}(V_{2}/l'')$\\

and $(\dag\dag)=Mult_{(t_{0},l_{0})}(V_{2}/l'')(\sum_{q_{0,j}\in W(t,0)}I_{(q_{0,j},l_{0,j})}(C_{t},l_{0,j}))$\\

where $q_{0,j}\in l_{0,j}$. Hence;\\

$I_{p_{0}}(C_{t_{0}},l_{0})=\sum_{q_{0,j}\in W(t,0)}I_{(q_{0,j},l_{0,j})}(C_{t},l_{0,j})$ $(\dag\dag\dag\dag)$\\

It follows that, for $\overline{a}_{0}\in l''$, we have that the total multiplicity;\\

$K=\sum_{l\in V_{2}(\overline{a}_{0}), p\in (l\cap C_{\overline{a}_{0}})}I_{p}(C_{\overline{a}_{0}},l)=\sum_{l\in V_{2}(\overline{a}),p\in (l\cap C_{\overline{a}})}I_{p}(C_{\overline{a}},l)$\\

for generic $\overline{a}\in (\mathcal{V}_{\overline{a}_{0}}\cap l'')$, hence, for all $\overline{a}\in \{{(\mathcal{V}_{\overline{a}_{0}}\cap l'')\setminus{\overline{a}_{0}}}\}$.\\

Removing the points of contact $1$, we obtain, for $l\in V_{2}(\overline{a}_{0})$, $p\in (l\cap C_{\overline{a}_{0}})$, with $I_{p}(C_{\overline{a}_{0}},l)\geq 2$, that;\\

$I_{p}(C_{\overline{a}_{0}},l)=\sum_{q\in C_{\overline{a}}\cap l'\cap\mathcal{V}_{p},l'\in V_{2}(\overline{a})\cap \mathcal{V}_{l}}I_{q}(C_{\overline{a}},l')$\\

$\geq \sum_{q\in C_{\overline{a}}\cap l'\cap\mathcal{V}_{p},l'\in V_{2}(\overline{a})\cap \mathcal{V}_{l},I_{q}(C_{\overline{a}},l')\geq 2}I_{q}(C_{\overline{a}},l')$\\

$L=\sum_{l\in V_{2}(\overline{a}_{0}), p\in (l\cap C_{\overline{a}_{0}}),I_{p}(C_{\overline{a}_{0}},l)\geq 2}I_{p}(C_{\overline{a}_{0}},l)$\\

$\geq \sum_{l\in V_{2}(\overline{a}),p\in (l\cap C_{\overline{a}}),I_{p}(C_{\overline{a}_{0}},l)\geq 2}I_{p}(C_{\overline{a}},l)$ $(\dag\dag\dag)$\\

(\footnote{\label{move1} It follows that, if $V(\overline{a}_{0},l)$ holds, and $p\in (C_{\overline{a}_{0}}\cap l)$, with $I_{p}(C_{\overline{a}_{0}},l)=w\geq 2$, and $Mult_{(\overline{a}_{0},l)}(V_{2}/l'')=b$, then, if $\epsilon>0$ is standard, with $B(p,\epsilon)^{c}\cap (C_{\overline{a}_{0}}\cap l)=B(l,\epsilon)^{c}\cap (V_{2}({\overline{a}_{0}}))=\emptyset$, the statement $\forall t'\in ({\mathcal{V}_{\overline{a}_{0}}\setminus \{\overline{a_{0}}\}}\cap l'')Q(t')$ holds, where;\\

$Q(t')\equiv\exists_{\bigwedge_{1\leq j\leq b}l_{j}}\exists_{\bigwedge_{1\leq j\leq b,1\leq a(j)\leq t(j)}p_{j,a(j)}}[l_{j}\in B(l,\epsilon)\wedge (t',l_{j})\in V_{2}(t')\wedge(p_{j,a_{j}}\in C_{t'}\cap l_{j}\cap B(p,\epsilon))\wedge I_{p_{j,a_{j}}}(C_{t'},l_{j})\geq 2\wedge\bigvee_{1\leq j\leq r,1\leq a(j)\leq t(j),\sum \theta_{(j,a_{j})}\leq w}$\\
$I_{p_{j,a_{j}}}(C_{t'},l_{j})=\theta_{(j,a_{j})}]$\\

where;\\

$I_{p_{j,a_{j}}}(C_{t'},l_{j})\geq 2\equiv \forall s\in {B(0,\epsilon)\setminus\{0\}}\exists_{w_{1}\neq w_{2}}\bigwedge_{1\leq i\leq 2}(w_{i}\in C_{t'}\cap (y+l_{1,j}x+(l_{2,j}+s))\cap B(p_{j,a_{j}},\epsilon))$\\

$I_{p_{j,a_{j}}}(C_{t'},l_{j})=\theta_{(j,a_{j})}\equiv \forall s\in {B(0,\epsilon)\setminus\{0\}}\exists^{=\theta_{j,a_{j}}}{w_{k}}\bigwedge_{1\leq k\leq \theta_{j,a_{j}}}(w_{k}\in C_{t'}\cap (y+l_{1,j}x+(l_{2,j}+s))\cap B(p_{j,a_{j}},\epsilon))$\\

Using Theorem 17.1 of \cite{dep4}, that a monad $\mu(p)$ coincides with an infinitesimal neighborhood $\mathcal{V}_{p}$, for $p\in P^{k}(\mathcal{C})$, and the fact that, for any infinite $n\in{^{*}\mathcal{N}}$, $B(p,{1\over n})\subset {^{*}U}$, for any open set $U$ in the complex topology, the property $Q$ holds for all infinite $n\in{^{*}\mathcal{N}}$, with $t'\in {B({\overline{a}_{0}},{1\over n})\setminus\{\overline{a}_{0}\}\cap l''}$. By the underflow principle and transfer, see \cite{cut}, it holds in the standard model, for all $n\in\mathcal{N}$, $n\geq k$, for some $k\in\mathcal{N}$, with $t'\in {B({\overline{a}_{0}},{1\over n})\setminus\{\overline{a}_{0}\}\cap l''}$, in particular, for $t'\in {B({\overline{a}_{0}},{1\over k})\setminus\{\overline{a}_{0}\}\cap l''}$.\\

Repeating this argument, for each $p\in (C_{\overline{a}}\cap l)$, with $I_{p}(C_{\overline{a}},l)\geq 2$, and each bitangent line $l$, with corresponding $B({\overline{a}_{0}},{1\over k_{p,l}})$, it follows that, taking the intersection $\bigcap_{l,p}B({\overline{a}_{0}},{1\over k_{p,l}})$, the total multiplicity of the new bitangent points is lowered. We can then, wlog, move the initial curve $C_{0}$ to a point $\overline{b_{0}}\in l''$, for which the fibre $V_{2}(\overline{b_{0}})$ is (in the sense of Zariski structures) unramified.}).\\

By footnote \ref{move1}, we can assume that the fibre $V_{2}(\overline{a_{0}})$ is unramified in the sense of Zariski structures, $(****)$. As $V_{k+1}\subset V_{k}$ is relatively closed, for $k\geq 2$, we have that, for $\overline{a_{0}}'\in(\mathcal{V}_{\overline{a_{0}}}\cap l'')$, $V_{2}(\overline{a_{0}}')\cap\mathcal{V}_{l'_{j}}\subset V_{2}$, for $1\leq j\leq r$, $V_{2}(\overline{a_{0}}')\cap\mathcal{V}_{l_{k,j(k)}}\subset (V_{k+1})^{c}$, for $1\leq j(k)\leq s(k)$, $k\geq 4$, and $2\leq j(k)\leq s(k)$, $k=3$, $V_{2}(\overline{a_{0}}')\cap({\mathcal{V}_{l_{3,1}}})\subset V_{3}^{c}$, using the fact that $(({l''\setminus\{\overline{a_{0}}\}})\cap W^{6})=\emptyset$, which gives that $\overline{a_{0}}$ is not a base point of the $g_{n}^{1}$ defined by the $l''$, (intersecting with $l_{3,1}$), and Lemma 2.10 of \cite{dep3}. It follows again that the statement $\forall t'\in ({\mathcal{V}_{\overline{a}_{0}}\setminus \{\overline{a_{0}}\}}\cap l'')P(t')$ holds, where;\\

$P(t')\equiv\exists_{(\geq r+1, l_{j}')}\exists_{(\geq s(3)-1,l_{3,j'},l_{3,j'}\neq l_{j}' )}\ldots\exists_{(\geq s(k),l_{k,j_{k}},l_{k,j_{k}}\neq l_{j}'\cup_{3\leq s\leq k-1}l_{s,j_{s}})}$\\

$[(l'_{j}\in V_{2}(t'))\wedge l_{3,j'}\in V_{4}^{c}(t')\wedge\ldots\wedge l_{k,j_{k}}\in V_{k+1}^{c}(t')]$\\

Using the argument of footnote \ref{move1} again, it follows that the property $P$ holds for $t'\in {B({\overline{a}_{0}},{1\over k})\setminus\{\overline{a}_{0}\}\cap l''}$, for some $k\in\mathcal{N}$. Choosing ${\overline{a}_{1}}\in {B({\overline{a}_{0}},{1\over k})\setminus\{\overline{a}_{0}\}\cap l''}$, and, using the result of footnote \ref{move1}, it follows that, for the new curve $C_{\overline{a}_{1}}$, the total weight $\sum_{k\geq 3}s(k)$ is \emph{strictly} (compare $(\dag\dag\dag)$) reduced, $(*****)$. Now repeating the argument from $(****)$, and using $(*****)$, we obtain, after a finite number $c$ of steps, a nonsingular curve $C_{\overline{a}_{c}}$, with the property that it has no $k$-tangents for $k\geq 3$, and exactly ${n(n-2)\over 2}$ bitangents, $(******)$.\\

Relabelling $C_{\overline{a}_{c}}$ as $C_{\overline{a}_{0}}$, we choose $n$ bitangent lines $\{l_{1},\ldots,l_{n}\}$. We now show how to obtain the condition that the lines intersect in exactly ${n(n-1)\over 2}$ points. $(\sharp)$, Suppose not, then, wlog, $\{l_{1},l_{2},l_{3}\}$ intersect in a point $q$. Let $\{p_{1,1},p_{1,2},p_{2,1},p_{2,2},p_{3,1},p_{3,2}\}$ denote the $6$ distinct tangent points on $\{l_{1},l_{2},l_{3}\}$. Let $\{W^{10},W^{12}\}\subset P^{(n+1)(n+2)\over 2}$ be defined as above, with $W^{10}$ defining curves of degree $n$, bitangent to $\{l_{1},l_{2}\}$ at $\{p_{1,1},p_{1,2},p_{2,1},p_{2,2}\}$, and $W^{12}$ defining curves of degree $n$, bitangent to $\{l_{1},l_{2}\}$ at $\{p_{1,1},p_{1,2},p_{2,1},p_{2,2}\}$, and tangent to $l_{3}$ at $p_{3,1}$. Again, choose a line $l'''\subset W^{10}$, with $l'''\cap W^{12}={\overline{a}_{0}}$. Using $(\dag\dag\dag\dag)$, the points $\{l_{1},\ldots,l_{n}\}$ in the fibre $V_{2}(\overline{a}_{0})$ are unramified, (\footnote{If one of the lines $l_{1}$ ramifies to $\{l_{1}',l_{1}''\}$, considering the cover $W$, and using $(\dag\dag\dag\dag)$,  we obtain $4$ points $p_{1,i,j}\in l_{i}'\cap\mathcal{V}_{p_{1,i}}$, $1\leq i,j\leq 2$, with $I_{p_{1,i,j}}(C_{\overline{a}_{0}}',l_{i})=1$, and no further points $p\in l_{i}'$, with $I_{p}(C_{\overline{a}_{0}}',l_{i})\geq 2$. It follows that the lines $\{l_{1}',l_{1}''\}$ can no longer even be tangent to the curve, let alone bitangent.}) Then, if $\overline{a}'_{0}\in({\mathcal{V}_{\overline{a}_{0}}\setminus{\overline{a}_{0}}})$, the corresponding $\{l'_{1},l'_{2},l'_{3}\}$ intersect in $3$ distinct new points $\{q,q_{1},q_{2}\}\subset\mathcal{V}_{q}$. Again, using the argument in footnote \ref{move1}, and, considering the cover $T/l'''$, defined by $T(p,t)\equiv \exists (l_{1},l_{2})(l_{1}\neq l_{2}\wedge p\in l_{1}\cap l_{2}\wedge \bigwedge_{1\leq i\leq 2}V_{2}(l_{i},t)$, it follows that the total number of intersections between the $n$ bitangent lines is increased, in $({\mathcal{V}_{\overline{a}_{0}}\setminus{\overline{a}_{0}}})$. Again, using the argument of footnote \ref{move1}, this property holds on some ${B({\overline{a}_{0}},{1\over k})\setminus{\overline{a}_{0}}}$, $k\in\mathcal{N}$. Now, we can choose $\overline{a}_{1}\in{B({\overline{a}_{0}},{1\over k})\setminus{\overline{a}_{0}}}$ and obtain a new curve $C_{\overline{a}_{1}}$, with this property. Again, using $(\dag\dag\dag)$, and the argument of footnote \ref{move1}, with $B({\overline{a}_{0}},{1\over k'})\subset B({\overline{a}_{0}},{1\over k})$, $k'>k$,  the condition $(******)$ is maintained. Repeating the argument, from $(\sharp)$, we obtain, after a finite number $c'$ of steps, a curve $C_{\overline{a}_{c}}$, with $n$ bitangent lines $\{l_{1},\ldots,l_{n}\}$, intersecting in ${n(n-1)\over 2}$ points, $(\sharp\sharp)$.\\

Now, again relabelling $C_{\overline{a}_{c}}$ to $C_{\overline{a}_{0}}$, with bitangent points\\
 $B=\{p_{1,1},\ldots,p_{1,n},p_{2,1},\ldots,p_{2,n}\}$, (wlog in finite position) we show how to preserve the condition $(\sharp\sharp)$ and find lines $\{l_{a},l_{b}\}$, with $\{p_{1,1},\ldots,p_{1,n}\}\subset l_{a}$ and $\{p_{2,1},\ldots,p_{2,n}\}\subset l_{b}$, $(!!!)$. Choose $l_{a}\neq l_{1}$, passing through $p_{1,1}$, intersecting $\{l_{2},l_{3},\ldots,l_{n}\}$ at the distinct points $\{q_{2},\ldots,q_{n}\}$ in finite position, distinct from $\{p_{1,2},\ldots,p_{1,n},p_{2,2},\ldots,p_{2,n}\}$, any of the other transverse intersections between $C_{\overline{a}_{0}}$ and $\{l_{2},l_{3},\ldots,l_{n}\}$, and the intersections $\{p_{i,j}=(l_{i}\cap l_{j}):1\leq i<j\leq n\}$, $(\sharp\sharp\sharp)$. We follow the argument in the following footnote \ref{variation}. After $n-1$ steps, we obtain a curve $C_{\overline{a}_{n-1}}$, such that the new tangent points to  $\{q_{1}=p_{1,1},q_{2},\ldots,q_{n}\}$ to $\{l_{1},l_{2},\ldots,l_{n}\}$ intersect the line $l_{a}$ transversely, and the bitangents $\{l_{1},\ldots,l_{n}\}$, formed by $\{q_{1},p_{2,1},\ldots,q_{n},p_{2,n}\}$ are in general position, $(!!)$. Now choose $l_{b}$, passing through $p_{2,1}$, such that the intersections with $\{l_{2},l_{3},\ldots,l_{n}\}$ at the distinct points $\{r_{2},\ldots,r_{n}\}$ are in finite position, distinct from $\{q_{1},\ldots,q_{n},p_{2,2},\ldots,p_{2,n}\}$, and any of the other transverse intersections between $C_{\overline{a}_{0}}$ and $\{l_{2},l_{3},\ldots,l_{n}\}$, and the intersections $\{p_{i,j}=l_{i}\cap l_{j}:1\leq i<j\leq n\}$ . Again, using the argument in footnote \ref{variation}, and, repeating the $(n-1)$ steps from $(!!)$, replacing $\{q_{2},\ldots,q_{n}\}$ with $\{r_{2},\ldots,r_{n}\}$, $(r_{1}=p_{2,1}$, we obtain a curve $C_{\overline{a}_{2(n-1)}}$, with the required property $(!!!)$, that the bitangent lines $\{l_{1},l_{3},\ldots,l_{n}\}$
are in general position, and the tangent points $\{q_{1},\ldots,q_{n},r_{1},\ldots,r_{n}\}$ lie on the lines $l_{a}$ and $l_{b}$ respectively, (\footnote{\label{variation} Moving tangents on fixed bitangent lines; for a given bitangent line $l_{j}$, with tangents $a,b$, and target $c$,  move $b$ to $c$, keeping $a$ fixed, while preserving bitangent conditions on the other lines $\{l_{i}:i\neq j\}$, with bitangents $\{p_{k,i}:1\leq k\leq 2,i\neq j\}$. Let the object curve $C$, of degree $n$, be denoted by $C_{\overline{c}}$, for $\overline{c}\in P^{{(n+1)(n+2)\over 2}}$.\\

Consider the irreducible dual curve $(C_{\overline{c}})^{*}$, with nodes $\{\nu_{j}:1\leq j\leq t\}$, $t={n(n-2)\over 2}$, and cusps $\{\kappa_{j}:1\leq j\leq s\}$, corresponding to the bitangents (the first $n$ nodes corresponding to the bitangent array considered above) and inflexions of $C_{\overline{c}}$, see Lemma \ref{harmonic} and Theorem 5.1 of \cite{dep1}. Assuming that the cusps are ordinary, that is of character $(2,1)$, we have that $n=deg(C_{\overline{c}})=cl((C_{\overline{c}})^{*})={3deg((C_{\overline{c}})^{*})-s\over 3}=n(n-1)-{s\over 3}$, using Theorem 6.4 of \cite{dep1}, (in particular $s=3n(n-2)$). We consider the Severi variety, $V_{d}^{t,s}=V_{n(n-1)}^{{n(n-2)\over 2},3n(n-2)}$, consisting of curves of degree $d=n(n-1)$, with $t={n(n-2)\over 2}$ nodes and $s=3n(n-2)$ cusps. Using deformation theory, developed in \cite{Ser}, $dim(V_{d}^{t,s})=3d+g-1-s$, where $g=g(C_{\overline{c}}^{*})=g(C_{\overline{c}})={(n-1)(n-2)\over 2}$, so $dim(V_{d}^{t,s})=3n(n-1)+{(n-1)(n-2)\over 2}-1-3n(n-2)={n(n+3)\over 2}$. We let $B_{n,d}\subset P^{{(d+1)(d+2)\over 2}}$ be the linear space of codimension $n$, consisting of curves of degree $d=n(n-1)$, passing through $\{\nu_{j}:1\leq j\leq n\}$, and $Z_{d}^{t,s}=(V_{d}^{t,s}\cap B_{n,d})$.  By presmoothness, we have that $dim_{comp}(Z_{d}^{t,s})\geq dim(V_{d}^{t,s})-n={n(n+3)\over 2}-n={n(n+1)\over 2}$. (As above, for $\overline{e}\in V_{d}^{t,s}$, we have that the tangent space $T_{\overline{e}}(V_{d}^{t,s})$, consists of curves of degree $d$ passing through the $t$ nodes and $s$ cusps of $C_{\overline{e}}$, (see Severi's calculations on curves infinamente vicine, in \cite{dep1}).)\\

We let $\Phi:V_{d}^{t,s}\rightarrow P^{(n+1)(n+2)\over 2}$ be the duality map, $C_{\Phi(\overline{e})}=(C_{\overline{e}})^{*}$, for $\overline{e}\in V_{d}^{t,s}$, and let $W_{d}^{t,s}\subset P^{(n+1)(n+2)\over 2}$, $W_{d}^{t,s}=Im(\Phi)\cong V_{d}^{t,s}$  be the corresponding variety of nonsingular curves of degree $n$, and $t={n(n-2)\over 2}$ bitangents, $s$ inflexions, and $Y_{d}^{t,s}\subset W_{d}^{t,s}$ be the corresponding variety to $Z_{d}^{t,s}$, $dim_{comp}(W_{d}^{t,s})\geq {n(n+1)\over 2}$. Let $G_{1,j,c}=(\overline{Y_{d}^{t,s}}\cap A_{1,j,c}$, then, again, by presmoothness, we have that $dim_{comp}(G_{1,j,c})\geq {n(n+1)\over 2}-(7n-4)={n^{2}-13n+8\over 2}$. Suppose there exists an $\overline{e}\in G_{1,j,c}$, such that the corresponding curve $C_{\overline{e}}$ is irreducible, $(\sharp\sharp)$. We claim that $Sing(C_{\overline{e}})\cap(\{p_{k,j}:1\leq k\leq 2,i\neq j\}\cup\{a,c\})=\emptyset$, $(***)$.  In order to see $(***)$, suppose that there exists a singularity of $C_{\overline{e}}$, centred at one of the bitangents $p_{1,i}$ or $p_{2,i}$, for $j\neq i$. As ${\overline{e}}\in Y_{d}^{t,s}$, we have either that, say Case 1, $I_{p_{1,i}}(C_{\overline{e}},l_{i})\geq 3$ (one of the nodal branches is tangent to $l_{i}$), or Case 2, there exists another tangent of $C_{\overline{e}}$, centred at $q\in({l_{i}\setminus \{p_{1,i},p_{2,i}\}})$; as the corresponding dual curve $(C_{\overline{e}})^{*}$, belonging to $\overline{Z_{d}^{t,s}}$, has either a cusp or node singularity at the point $\nu_{i}$, corresponding to the bitangent line $l_{i}$. Choose an irreducible curve $C_{i}\subset Y_{d}^{t,s}\cap A_{2,j,x}$, containing $\{\overline{c},\overline{e}\}$, with $C_{i}\cap A_{1,j,c}=\overline{c}$, so for $\overline{e}'\in({C_{i}\setminus \{\overline{e}\}})$, the corresponding curve $C_{\overline{e}'}$ is nonsingular. Consider the cover $R_{i}\subset C_{i}\times l_{i}$, defined by $R_{i}(\overline{t},x)\equiv x\in (l_{i}\cap C_{\overline{t}})\wedge \overline{t}\in C_{i}$. In Case 1, we claim there exists an irreducible component $X_{1}$ of $R_{i}$, passing through $(\overline{e},p_{1,i}$, such that $pr_{2}(X_{1})=l_{i}$, $pr_{1}(X_{1})=C_{i}$. This follows from the fact, that if $\overline{e}'\in({\mathcal{V}_{\overline{e}}\setminus \{\overline{e}\}})$, there exists $p'_{1,i}\in ({\mathcal{V}_{p_{1,i}}\setminus \{p_{1,i}\}})$, with $(\overline{e}',p'_{1,i})\in R_{i}$. In order to see this, choose a direction $(x_{0},y_{0})$, not on $l_{i}$, and, for $s\in P^{1}$, let $C^{s}_{\overline{t}}(x,y)=C_{\overline{t}}(x+sx_{0},y+sy_{0})$. Considering the cover $T_{i}\subset P^{1}\times C_{i}\times l_{i}$, defined by $T_{i}(s,\overline{t},x)\equiv x\in C^{s}_{\overline{t}}\cap l_{i}$, we have that $Mult_{(p_{1,i},0,\overline{e})}(T_{i}/P^{1}\times C_{i})\geq 3$, $Mult_{(p_{1,i},0,\overline{e}')}(T_{i}/P^{1}\times C_{i})=2$, hence, by summability of specialisation, there exists $p'_{1,i}\in ({\mathcal{V}_{p_{1,i}}\setminus \{p_{1,i}\}})$, with $Mult_{(p_{1,i}',0)}(T_{i}(\overline{e}')/P^{1})\geq 1$. Taking an irreducible component of $X_{2}$ of $R_{i}$ through $(\overline{e},q)$ with $q\neq p'_{1,i}$, we either have that $X_{1}=X_{2}$, in which case $deg(X_{2}/C_{i})\geq 2$, so there exists $(\overline{e}'',q')\in X_{1}$, with $\overline{e}''\in ({C_{i}\setminus \{\overline{e}\}})$ and $Mult_(\overline{e}'',q')(R_{i}/C_{i})\geq 2$. As $C_{\overline{e}''}$ is nonsingular, we have that $q'$ defines a tangent with $l_{i}$. It follows that $l_{i}$ is a tritangent to $C_{\overline{e}''}$ and the corresponding dual curve $(C_{\overline{e}''})^{*}$, has a triple node at $\nu_{i}$, contradicting the definition of $\{Y_{d}^{t,s},Z_{d}^{t,s}\}$. Otherwise, $X_{1}\neq X_{2}$, and, as we can assume now that $deg(X_{1}/C_{i})=1$, we can find again find an intersection $(\overline{e}'',q')\in (X_{1}\cap X_{2})$, with $Mult_(\overline{e}'',q')(R_{i}/C_{i})\geq 2$, $\overline{e}''\in ({C_{i}\setminus \{\overline{e}\}})$, $q\neq p'_{1,i}$, and we can use the same argument as before. (Case 2 is similar, and the line $l_{j}$). It follows that $(***)$ holds.\\

We let $B_{1,j,c}$ be the linear space of codimension $4(n-1)+4=4n$, consisting of curves of degree $n$, tangent to $l_{i}$, $i\neq j$, at remaining bitangents, tangent to $l_{j}$ at $a$ and $c$, and $B_{2,j,c}$, the linear space of codimension $4(n-1)+2=4n-2$, consisting of curves of degree $n$, tangent to $l_{i}$, $i\neq j$, at remaining bitangents, tangent to $l_{j}$ at $a$, so $B_{1,j,c}\subset B_{2,j,c}$. We have that the curve $C_{lines}\in B_{1,j,c}$, where $C_{lines}$ consists of the union $\bigcup_{1\leq j\leq n}l_{j}$. We let $B_{h_{i,j},1,j,c}\subset B_{1,j,c}\subset B_{2,j,c}$ be the ${n(n-1)\over 2}$ linear spaces of codimension $4n+1$, consisting of curves $C_{\overline{l}}$ in $B_{1,j,c}$, with $h_{i,j}\in C_{\overline{l}}$, where $h_{i,j}=(l_{i}\cap l_{j})$, for $i\neq j$. Choose $C_{\overline{a}}$, with $\overline{a}\in B_{1,j,c}$ generic, and let $l=span(\overline{a},lines)$, so $l\subset B_{1,j,c}$. If $C_{\overline{a}}$ is irreducible, then using the result of $(++)$, applied to the linear system $l$, if $p$ is a singularity of $C_{\overline{a}}$, then $p$ must be situated at an intersection point $h_{i,j}$ for some $i\neq j$. As $\overline{a}$ is generic, we have that $h_{i,j}\notin C_{\overline{a}}$, for $i\neq j$, hence $C_{\overline{a}}$ is nonsingular. If $C_{\overline{a}}$ is reducible, with, wlog, irreducible components $\{C_{1},C_{2}\}$, then, using $(!!),(!!!!!)$, we have that $C_{1}=\bigcup_{j\in J}l_{j}$, for some $J\subset \{1,\ldots,n\}$, with $Card(J)=n_{1}$, and there exists an isomorphic linear system $L_{2}$ of curves $D_{i_{2}^{-1}(\overline{l})}$, with degree $n-n_{1}$, $i_{2}:L\rightarrow L_{2}$, with $C_{\overline{l}}=(C_{1}\cup D_{i_{2}^{-1}(\overline{l})}$, for $\overline{1}\in L$, with fixed singularities $\{p_{1},\ldots,p_{r}\}$ on $D_{\overline{l}}$. As above, we can assume that $\{p_{1},\ldots,p_{r}\}\cap \{h_{i,j}:1\leq i<j\leq n\}=\emptyset$. Moreover, $\{p_{1},\ldots,p_{r}\}\cap({(C_{1}\cup C_{2})\setminus C_{lines}})=\emptyset$, as the singularities are fixed. We must, then have that $\{p_{1},\ldots,p_{r}\}\subset (C_{0}\cap C_{1})$, but $C_{0}\subset C_{lines}$, hence, $\{p_{1},\ldots,p_{r}\}$ define singularities of $C_{lines}$, as $\{p_{1},\ldots,p_{r}\}\subset C_{1}\cap D_{i_{2}^{-1}(\overline{1})}$, for $\overline{1}\in L$. This contradiction gives that $C_{\overline{a}}$ is irreducible, and, hence, by the previous part, nonsingular.\\

Considering again the variety $Y_{d}^{t,s}$, let $K\subset P^{{(n+1)(n+2)\over 2}}$ be defined by;\\

$K(\overline{l})\equiv \bigwedge_{i=1}^{n}\exists(x_{i}\neq y_{i})(l_{i}(x_{i})\wedge l_{i}(y_{i})\wedge l_{x_{i}}=l_{y_{i}}=l_{i})$\\

Then we have that $K(lines)$, and $dim(\overline{{K\setminus{(K\cap Y_{d}^{t,s})}}})<dim(\overline{K})$, $dim(\overline{{Y_{d}^{t,s}\setminus{(Y_{d}^{t,s}\cap K)}}})<dim(\overline{Y_{d}^{t,s}})$. It follows, as $Y_{d}^{t,s}$ is irreducible, that $\overline{Y_{d}^{t,s}}(lines)$. Now, choosing $\overline{e}\in {({\overline{Y_{d}^{t,s}}}\cap B_{1,j,c})\setminus \bigcup_{i\neq j}B_{h_{i,j},1,j,c}}$ (do this..) generic, we claim that, if $l=span(\overline{e},lines)$, then $l\subset(\overline{Y_{d}^{t,s}}\cap B_{1,j,c})$, $(+)$. Suppose that $C_{\overline{e}}$ is irreducible. By the above proof of $(***)$, the singularities are disjoint from the bitangent points $\{p_{k,i}:1\leq k\leq 2,i\neq j\}\cup\{a,c\}$. It follows that, as the bitangent points $\{p_{k,i}:1\leq k\leq 2,i\neq j\}\cup\{a,c\}\subset (C_{lines}\cap C_{\overline{e}})$, they belong to every $C_{\overline{l}}$, with $\overline{l}\in l$, and for $\overline{e}'\in (({\mathcal{V}_{\overline{e}}\setminus \{\overline{e}\}})\cap l)$, we have that, they define nonsingular points of $C_{\overline{e}'}$. As $\overline{e}'$ is generic, we have that $\overline{e}'\notin B_{p_{i,j},1,j,c}$, for $1\leq i<j\leq n$, hence, by the above analysis $C_{\overline{e}'}$ is nonsingular. Moreover, using $(***)$, no third tangent can occur along any of the $\{l_{j}:1\leq j\leq n\}$. It follows that the dual curve $(C_{\overline{e}'})^{*}\in Z_{d}^{t,s}$, hence, $C_{\overline{e}}\in Y_{d}^{t,s}$, giving the result $(+)$. If $C_{\overline{e}}$ is reducible, then, again by the above analysis $C_{\overline{e}}$ is irreducible, and we obtain the result. Taking ${\overline{e}'}\in (({\mathcal{V}_{\overline{e}}\setminus \{\overline{e}\}})\cap l)$, we obtain a nonsingular curve $C_{\overline{e}'}\in {\overline{Y_{d}^{t,s}}}\cap B_{1,j,c}$, which satisfies the required properties of the footnote.\\

(Fixed Singularities 1) Let $L$ be a (generically) irreducible, $1$-dimensional linear system of plane curves of degree $n$, defined by $f(x,y;t)$,  Let $F\subset L\times P^{2}$ be defined by;\\

$F(t,\overline{z})\equiv \overline{z}\in Sing(C_{t})\equiv f(\overline{z},t)=0\wedge {\partial f\over \partial x}|_{\overline{z}}=0\wedge {\partial f\over \partial y}|_{\overline{z}}=0$\\

We claim that the irreducible components of $F$ are of the form $L\times\{(t_{0},{\overline{z}_{0}})\}$ or $(t_{0},{\overline{z}_{0}})$, with $(t_{0},{\overline{z}_{0}})\in L\times P^{2}$, or $(\{(t_{0}\}\times C'_{t_{0}})$, with $C'_{t_{0}}$ defining a non-reduced component of $C_{t_{0}}$, $(++)$.

Clearly every irreducible component of $F$, has dimension at most $1$, as $deg({\partial f\over \partial x}<deg(f)$. As the family, defined by $L$, is generically irreducible, we can remove the finitely many parameters $\{t_{j}:1\leq j\leq s\}\subset L$ defining curves $C_{t_{j}}$, with reduced components. Suppose there exists an irreducible component $F_{0}$, of dimension $1$, with $pr_{P^{2}}(F_{0})$ defining an irreducible curve $D\subset P^{2}$, of degree $m$, with $F_{0}\nsubseteq (\{(t_{0}\}\times P^{2})$, so $pr_{L}(F_{0})=L$. The series $W_{t}(\overline{z})\equiv \overline{z}\in(D\cap C_{t})$ defines a $g^{1}_{nm}$ on $D$ with parameter space $U=({L\setminus\{t_{j}:1\leq j\leq s\}})$, as if there exists $t'\in U$, with $D\subset C_{t'}$, $C_{t'}$ would contain a reduced component. Using the fact that $pr_{L}(F_{0})=L$, we have, for generic $t\in U$, that there exists $p\in ({D\setminus Base(L)})\cap Sing(C_{t})$. We have that $I^{L}(p,D,C_{t})=1$, and, by definition of $F$, that $I(p,D,C_{t})\geq 2$, see notation in \cite{dep3}. Using the result of \cite{dep3}, Lemma 2.10, (with the slight modification, that we have removed finitely many points from $L$) we obtain a contradiction. Hence, $F_{0}$ is of the form $L\times\{(t_{0},{\overline{z}_{0}})\}$, with $(t_{0},{\overline{z}_{0}})\in L\times P^{2}$, giving the claim $(++)$.\\

(Fixed Singularities, $2$) Let $L\subset Sing(C)$ be a generically irreducible linear system, then there exists an open set $U\subset L$, such that for each $\overline{a}\in U$, $C_{\overline{a}}$ has exactly $r$ singularities, centred at $\{p_{1},\ldots,p_{r}\}$, $(\sharp)$. To see this, suppose that a generic curve has $r$ singularities, so the condition holds on an open set $U_{r}\subset L$. The condition $B_{r+1}$, that there exist at least $r+1$ singularities is closed, hence holds on $U_{r}^{c}$. Choose independent generic points $\{\overline{a}_{1},\overline{a}_{2}\}$ from $L$, then the line $l\subset L$, connecting ${\overline{a}_{1}}$ and ${\overline{a}_{2}}$, intersects $B_{r+1}$ in finitely many points, not including $\{\overline{a}_{1},\overline{a}_{2}\}$.
Suppose $Sing(C_{\overline{a}_{1}})\neq Sing(C_{\overline{a}_{2}})$, and consider the linear system defined by $l\subset L$. Using the result of $(***)$, we have, if $q$ is a singularity of $C_{\overline{a}_{1}}$, not of $C_{\overline{a}_{2}}$, then, for ${\overline{a}_{1}}'\in\mathcal{V}_{\overline{a}_{1}}$, we have that $Sing(C_{\overline{a}_{1}}')=Sing({\overline{a}_{1}\setminus q})$, contradicting the fact there are no curves in the family with $r-1$ singularities. It follows that $Sing(C_{\overline{a}_{1}})\neq Sing(C_{\overline{a}_{2}})$, and, as this condition is definable, the result follows.\\

$(!)$ Let $L$ be a $1$-dimensional linear system, consisting of curves of degree $n$, then if the generic curve $C_{\overline{a}}$ is reducible with irreducible components $\{C_{1},C_{2}\}$, of degrees $\{n_{1},n_{2}\}$, such that $n_{1}+n_{2}=n$, then every curve in the family is reducible with components of degrees $\{n_{1},n_{2}\}$.
To see this, let $V\subset L\times P^{n_{1}}\times P^{n_{2}}$ be defined by;\\

$V(\overline{l},\overline{t_{1}},\overline{t_{2}})\equiv C_{\overline{l}}=C_{\overline{t_{1}}}C_{\overline{t_{2}}}$\\

then $V$ is closed, and by completeness of closed projective varieties, so is the projection $W\subset L$;\\

$W=\exists\overline{t_{1}}\exists\overline{t_{2}}V$\\

As $\overline{a}$ is generic and $W(\overline{a})$, we have that $W=L$ as required.\\

$(!!)$ Let $L$ be a (generically) reducible, $1$-dimensional linear system of plane curves of degree $n$. Then, if $\overline{a}\in L$ is generic, with $C_{\overline{a}}$, having irreducible components $\{C_{1},C_{2}\}$, with degrees $\{n_{1},n_{2}\}$, such that $n_{1}+n_{2}=n$, then, either there exists a (generically) irreducible $1$-dimensional linear system $L_{1}$, of plane curves of degree $n_{1}$, and a linear isomorphism $i_{1}:L_{1}\rightarrow L$, such that, for $\overline{l_{1}}\in L_{1}$, $C_{i_{1}(\overline{l_{1}})}=C_{\overline{l_{1}}}.C_{2}$, or, there exists a (generically) irreducible $1$-dimensional linear system $L_{2}$, of plane curves of degree $n_{2}$, and a linear isomorphism $i_{2}:L_{2}\rightarrow L$, such that, for $\overline{l_{2}}\in L_{2}$, $C_{i_{2}(\overline{l_{2}})}=C_{1}.C_{\overline{l_{2}}}$.\\

In order to see this, as in $(***)$, we let $F\subset L\times P^{2}$ be defined by;\\

$F(t,\overline{z})\equiv \overline{z}\in Sing(C_{t})\equiv f(\overline{z},t)=0\wedge {\partial f\over \partial x}|_{\overline{z}}=0\wedge {\partial f\over \partial y}|_{\overline{z}}=0$\\

where $f$ defines $L$. We claim that, if $F_{0}$ is an irreducible component of $F$, such that $dim(pr_{P^{2}}(F_{0}))=1$, and $F_{0}\nsubseteq (\{t_{0}\}\times P^{2})$, then $pr_{P^{2}}(F_{0})=C_{1}$ or $pr_{P^{2}}F_{0}=C_{2}$, $(!!!)$. Suppose not, then, we have, that for generic $\overline{l}'\in L$, $C_{\overline{l}'}\cap pr_{P^{2}}F_{0}$ is finite, otherwise, $C_{\overline{l''}}\supset pr_{P^{2}}F_{0}$, for all $\overline{l''}\in L$, which is not the case, as $C_{1}$ and $C_{2}$ are irreducible. As in $(***)$, if $D=pr_{P^{2}}(F_{0})$, with degree $m$, the series $W_{t}(\overline{z})\equiv \overline{z}\in(D\cap C_{t})$, defines a $g^{1}_{nm}$ on $D$ with parameter space $V$, where $V=({L\setminus\{t\in L, C_{t}\supset D\}})$. As above, for generic $t\in V$, and using the fact that $pr_{L}(F_{0})=L$, we can find $p\in ({D\setminus Base(L)})\cap Sing(C_{t})$. We have that $I^{L}(p,D,C_{t})=1$, and, by definition of $F$, that $I(p,D,C_{t})\geq 2$, see notation in \cite{dep3}. Again we obtain a contradiction.\\

Suppose that $n_{1}\neq n_{2}$. Let $V_{1}\subset L\times P^{{(n_{1}+1)(n_{1}+2)\over 2}}$, $V_{2}\subset L\times P^{{(n_{2}+1)(n_{2}+2)\over 2}}$ be defined by;\\

$V_{1}(\overline{l},\overline{l_{1}})\equiv L(\overline{l})\wedge C_{\overline{l}}\supset C_{\overline{l}_{1}}$\\

$V_{2}(\overline{l},\overline{l_{2}})\equiv L(\overline{l})\wedge C_{\overline{l}}\supset C_{\overline{l}_{2}}$\\

We have that, for $1\leq j\leq 2$, $V_{j}$ consists of a unique $1$-dimensional irreducible component $V_{j,0}$ with $deg(V_{j,0}/L)=1$, together with finitely many points $\{p_{j,k}:1\leq k\leq t(j)\}$. For $1\leq j\leq 2$, we let $i_{j}:L\rightarrow V_{j,0}$ be the unique isomorphisms such that $pr_{1}\circ i_{j}=Id_{L}$. We let $V_{1,2}\subset L\times P^{2}$ be defined by;\\

$V_{1,2}(\overline{l},p)\equiv p\in C_{i_{1}(\overline{l})}\cap C_{i_{2}(\overline{l})}$\\

We have that $V_{1,2}$ is a closed generically finite cover of $L$. Let $Z$ be an irreducible component of $V_{1,2}$, not contained in $\{t_{0}\}\times P^{2}$, for some $t_{0}\in L$. By presmoothness, we have that $dim(Z)=1$. Suppose that $pr_{P^{2}}(Z)$ defines an irreducible curve $D\subset P^{2}$. If $D\notin\{C_{1},C_{2}\}$, then, as $Z$ defines an irreducible component of $F$, defined above, we obtain, by $(!!!)$, a contradiction. Hence, for any irreducible component $Z$ of $V_{1,2}$, not contained in $\{t_{0}\}\times P^{2}$, either Case 1; $pr_{P^{2}}(Z)=C_{i}$, for $i=1$ or $i=2$, or Case 2; $Z=L\times\{p_{0}\}$, for some $p_{0}\in C_{1}\cap C_{2}$. In Case 1, wlog $pr_{P^{2}}(Z)=C_{1}$. Suppose that $C_{1}$ is not an irreducible component of every $C_{\overline{l}}$, with $\overline{l}\in L$, then, as the condition $(\forall{l}\in L)C_{\overline{l}}\supset C_{1}$, $(!!!!)$,  fails, and this condition is closed, it follows there exist finitely many parameters $\{t'_{k}:1\leq k\leq t\}$ such that, if $\overline{l}\in W=({L\setminus\{t'_{k}:1\leq k\leq s\}})$, then $(C_{\overline{l}}\cap C_{1})$ is finite. We then obtain a $g^{1}_{n_{1}n}$ on $C_{1}$, with parameter space $W$, given by $N(z)\equiv z\in C_{1}\cap C_{\overline{l}}$, $\overline{l}\in W$. Choosing $p\in ({C_{1}\setminus Base(W)})\cap pr_{P^{2}}(Z\cap pr^{-1}_{L}(\overline{a}))$, for some generic $\overline{a}\in W$,  (if this fails then there exists an open $Q\subset W$, with $C_{1}\cap pr_{P^{2}}(Z\cap pr^{-1}_{L}(Q))=Base(W)$, which is not the case. We thus obtain $p\in C_{1}\cap Sing(C_{\overline{a}})$, as $p\in C_{i_{1}(\overline{a})}\cap C_{i_{2}(\overline{a})}$. We have that $I^{W}(p,C_{1},C_{\overline{a}})=1$, and that $I(p,C_{1},C_{t})\geq 2$, see notation in \cite{dep3}. Again we obtain a contradiction. However, we claim that Case 2 cannot always occur. We make the further assumption that, for \emph{any} $\{\overline{l},\overline{l'}\}\subset L$, we have that $C_{i_{1}(\overline{l})}\cap C_{i_{2}(\overline{l'})}$ is finite, $(!!!!!)$. (this is slightly stronger than the requirement that the condition $(!!!!)$ fails, we will weaken it later).

Fix $\overline{l_{0}}\in L$, and consider the $g^{1}_{n_{1}n_{2},\overline{l_{0}}}$ on $C_{\overline{l_{0}}}$, with parameter space $L$, obtained by intersecting $C_{\overline{l_{0}}}$ with $C_{i_{2}(\overline{l})}$, for $\overline{l}\in L$. We then claim that there exists a multiple point $p_{\overline{l_{0}}}$ for this $g^{1}_{n_{1}n_{2},\overline{l_{0}}}$. In order to see this, consider the variety $G_{\overline{l_{0}}}\subset L\times P^{2}$, defined by $G_{\overline{l_{0}}}(\overline{l},p)\equiv (f_{1}(p;\overline{l_{0}})=f_{2}(p;\overline{l})=0\wedge det({\partial f_{i}\over\partial x_{j}})_{1\leq i,j\leq 2}|_{p,\overline{l}}=0$. By presmoothness, and assuming that $det({\partial f_{i}\over\partial x_{j}})_{1\leq i,j\leq 2}|_{p,\overline{l}}\neq c$, for $c\neq 0$, $G_{\overline{l_{0}}}\neq\emptyset$, witnessed by $(\overline{l_{1,0}},p_{\overline{l_{0}}})$. Using the result of \cite{dep3}, Lemma 2.10, (which generalises easily to reducible curves), as above, either $p_{\overline{l_{0}}}\in Base(g^{1}_{n_{1}n_{2},\overline{l_{0}}})$, for this system, or it ramifies, that is $I_{p_{\overline{l_{0}}}}^{L}(C_{i_{1}(\overline{l_{0}})},C_{i_{2}(\overline{l_{1,0}})})\geq 2$. In the former case, we have that $\{\overline{l}\in L:I_{p_{\overline{l_{0}}}}(C_{i_{1}(\overline{l_{0}})},C_{i_{2}(\overline{l})})\geq k\}$ is definable and linear, hence, if $k_{1}$ is the minimum multiplicity of the $g^{1}_{n_{1}n_{2},\overline{l_{0}}}$ at $p_{\overline{l_{0}}}$, there exists $\overline{l_{1,0'}}\in L$, with $I_{p_{\overline{l_{0}}}}(C_{i_{1}(\overline{l_{0}})},C_{i_{2}(\overline{l_{1,0'}})})\geq k_{1}+1$, and for generic $\overline{l}\in L$, $I_{p_{\overline{l_{0}}}}(C_{i_{1}(\overline{l_{0}})},C_{i_{2}(\overline{l})})=k_{1}$, $(\dag\dag)$, and again we obtain ramification in $L$, that is $I_{p_{\overline{l_{0}}}}^{L}(C_{i_{1}(\overline{l_{0}})},C_{i_{2}(\overline{l_{1,0'}})})\geq 2$. Wlog we use the notation $\overline{l_{1,0}}$ for $\overline{l_{1,0'}}$. Now consider the variety $S\subset L\times L$, given by;\\

$S(\overline{l},\overline{l}')\equiv (\exists p)(p\in (C_{i_{l}(\overline{l})}\cap C_{i_{2}(\overline{l}')})\wedge I_{p}^{L,g^{1}_{n_{1}n_{2},\overline{l}}}(C_{i_{1}(\overline{l})},C_{i_{2}(\overline{l}')})\geq 2)$\\

By the above analysis, we obtain that $S$ is a closed finite cover of $L$, hence, intersecting with the diagonal $\Delta\subset (L\times L)$, we can find $(\overline{l_{1}},\overline{l_{1}})\in S$, and $p_{\overline{l_{1}}}\in P^{2}$ with  $I_{p_{\overline{l_{1}}}}^{L}(C_{i_{1}(\overline{l_{1}})},C_{i_{2}(\overline{l_{1}})})\geq 2$. It follows that, taking $\overline{l_{1}}'\in({\mathcal{V}_{\overline{l_{1}}}\setminus\{\overline{l_{1}}\}})$, we can find distinct point $\{p_{1,\overline{l_{1}}}, p_{2,\overline{l_{1}}}\}\subset (\mathcal{V}_{p_{\overline{l_{1}}}}\cap C_{i_{1}(\overline{l_{1}})}\cap C_{i_{2}(\overline{l_{1}}')})$, where $p_{\overline{l_{1}}}\in (C_{i_{1}(\overline{l_{1}})}\cap C_{i_{2}(\overline{l_{1}})})$. Now consider the cover $Y\subset L\times L\times P^{2}$, defined by $Y(\overline{l},\overline{l}',p)\equiv p\in (C_{i_{l}(\overline{l})}\cap C_{i_{2}(\overline{l}')})$, then, using summability of specialisation, we obtain that $Mult_{(\overline{l_{l}},\overline{l_{l}},p_{\overline{l_{1}}})}(Y/\Delta)\geq 2$, hence, there exist $2$ distinct irreducible components $\{Z_{l},Z_{2}\}$ of $V_{1,2}$, projecting onto $L$, passing through $(\overline{l_{1}},p_{\overline{l_{1}}})$. Clearly, such components cannot both be of the form required in Case 2. It follows that Case 1 holds, and we obtain the result, as required.\\

To complete the proof, with just the assumption that $(!!!!)$ fails, we have to allow for the possibility, that, for any given $C_{i_{1}(\overline{l_{0}})}$, there exist finitely many parameters $P=\{\overline{l_{0,j}}:1\leq j\leq t(\overline{l_{0,j}})\}$ such that $C_{i_{1}(\overline{l_{0}})}\cap C_{i_{2}(\overline{l_{0,j}})}$ contains a component of dimension $1$. In this case, we can remove the parameters $P$, setting $W=({L\setminus P})$, and obtain a $g_{n_{1}n_{2}}^{1}$ on $C_{i_{1}(\overline{l_{0}})}$, with parameter space $W$. We can then complete the $g_{n_{1}n_{2}}^{1}$ to a $g_{n_{1}n_{2},c}^{1}$, with parameter space $L$, by letting $F\subset (L\times C_{i_{1}(\overline{l_{0}})})$ be defined by $F_{\overline{l_{0}}}=\overline{H_{\overline{l_{0}}}}$, where $\overline{H_{\overline{l_{0}}}}\subset W\times C_{i_{1}(\overline{l_{0}})}$ is given by $\overline{H_{\overline{l_{0}}}}(\overline{l},p)\equiv (p\in C_{i_{1}(\overline{l_{0}})}\cap C_{i_{2}(\overline{l})})\wedge W(\overline{l})$, and letting the weighted set $B_{\overline{l_{0,j}}}=pr_{P^{2}}(F(\overline{l_{0,j}}))$, with weights $Mult_{a}(F_{\overline{l_{0}}}/L)$, for $a\in pr_{P^{2}}F(\overline{l_{0,j}})$. It is an easy exercise, left to the reader, to show that results above hold for this more abstract definition.\\

If $n_{1}=n_{2}$, we let $V_{3}\subset L\times P^{{(n_{1}+1)(n_{1}+2)\over 2}}$ be defined by;\\

$V_{3}(\overline{l},\overline{l_{3}})\equiv L(\overline{l})\wedge C_{\overline{l}}\supset C_{\overline{l_{3}}}$\\

Then, as $|V_{3}({a})|=2$, for generic $\overline{a}\in L$, $V_{3}$ has either two irreducible components $V_{j,0}$, for $1\leq j\leq 2$, with $deg(V_{j,0}/L)=1$, or a single irreducible component $R$, with $deg(R/L)=2$. In the first case, we repeat the argument above to obtain the result. In the second case, we let $M_{1,2}\subset L\times P^{2}$ be defined by $M_{1,2}=\overline{W_{1,2}}$, where;\\

$W_{1,2}(\overline{l},p)\equiv \exists(\overline{{l}_{1}}\overline{{l}_{2}})( R(\overline{l},\overline{{l}_{1}})\wedge R(\overline{l},\overline{{l}_{2}})\wedge (\overline{{l}_{1}}\neq \overline{{l}_{2}})\wedge (p\in C_{\overline{{l}_{1}}}\cap C_{\overline{{l}_{2}}})$\\

Arguing, as above, with $M_{1,2}$ replacing $V_{1,2}$, and observing that there exists an open set $U\subset L$, with $pr_{P^{2}}(M_{1,2}(\overline{l}))\subset Sing(C_{\overline{l}})$, for $\overline{l}\in U$, we obtain the result if Case 1 holds above, and, in fact $V_{3}$ has two irreducible components.\\

(!!!!!) (Fixed Singularities 3). Let $L\subset (Sing(C))$ be a generically reducible linear system, (with $2$ irreducible components) of curves of degree $n$. Then, for $\overline{l}\in L$, $C_{\overline{l}}=C_{1}\cup D_{\overline{l}}$, where $C_{1}$ and $D_{\overline{l}}$ are generically irreducible, and the singularities of the generic $D_{\overline{l}}$ are fixed everywhere, centred at $\{p_{l},\ldots,p_{r}\}$.\\

To see $(!!!!!)$, suppose the generic curve $C_{\overline{a}}=C_{1}\cup C_{2}$, $\overline{a}\in U$, where both $C_{1}$ and $C_{2}$ are irreducible curves, of degrees $\{n_{1},n_{2}\}$ with singularities centred at $A=\{q_{l},\ldots,q_{r'}\}$, $B=\{p_{1},\ldots,p_{r}\}$. Choose an independent generic curve $C_{\overline{a}'}$, and consider the $1$-dimensional linear system $l=span(\overline{a},\overline{a}')$. Using the result $(!!)$, we can suppose that $C_{\overline{a}}=(C_{1}\cup D_{\overline{a}})$, $C_{\overline{a}'}=(C_{1}\cup D_{\overline{a}'})$, where $\{D_{\overline{a}},D_{\overline{a}'}\}$ are irreducible of degree $n_{2}$, and belong to a new linear system $L_{2}$. By the result $(\sharp)$, we obtain that the singularities $B=\{p_{l},\ldots,p_{r}\}$ of $C_{2}$ are fixed, for $D_{\overline{l}}$, $\overline{l}\in L_{2}$. As ${\overline{a}'}$ is independent of $\overline{a}$, generic, the fixed singularities, $\{p_{l},\ldots,p_{r}\}$,  are defined over $acl(\overline{a})$, and the conditions that $C_{\overline{l}}\supset C_{1}$ and $\{p_{l},\ldots,p_{r}\}\subset Sing(C_{\overline{l}})$ are closed, the result holds on $L$, as required.\\}).

\end{proof}

\begin{lemma}
\label{harmonic}
Let $C$ be a harmonic curve and let $\{C_{t}:t\in Par_{t}\}$ be a family given as in Definition \ref{harmonic4}. Let $\{(x_{j,j'},y_{j,j'}):1\leq j<j'\leq n\}$ enumerate the points of intersection $l_{j}\cap l_{j'}$, in the coordinate system $(x,y)$. Then, for each $(x_{j,j'},y_{j,j'})$, and $t_{\infty}'\in((\mathcal{V}_{\infty}\cap Par_{t})\setminus\{\infty\})$  there exist exactly $2$ vertical tangents $\{(x_{1,t_{\infty}',j,j'},y_{1,t_{\infty}',j,j'}),(x_{2,t_{\infty}',j,j'},y_{2,t_{\infty}',j,j'})\}$, specialising to $(x_{j,j'},y_{j,j'})$.

\end{lemma}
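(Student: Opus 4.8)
The plan is to realise the vertical tangents of $C_{t'}$ as the intersection of $C_{t'}$ with its vertical polar curve, to show that the resulting cover of $Par_{t}$ has constant degree $n(n-1)$, and then to localise the specialisation of these $n(n-1)$ points at $t'=\infty$ at the $C^{n}_{2}$ nodes of $C_\infty=\bigcup_{j}l_j$, one node at a time. First, by Remark \ref{harmonic5}, I would fix the coordinate system $(x',y')$ in which $l_a=\{x=0\}$, $l_b=\{x=1\}$, the vertical point at infinity $[0:1:0]=l_a\cap l_b$, and all the $l_j\cap l_{j'}$ lie in finite position. Using the residual freedom in the linear isomorphism $L$, together with the fact that there are only finitely many lines, I would further arrange that none of $l_1,\dots,l_n$ is vertical, i.e. that no $l_j$ passes through $[0:1:0]$. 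Since the points at infinity of $C_\infty=\bigcup_j l_j$ are exactly the directions of the $l_j$, this guarantees $[0:1:0]\notin C_\infty$, hence $[0:1:0]\notin C_{t'}$ for all $t'\in\mathcal{V}_\infty$.

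Writing $C_t$ as $f(x,y,t)=0$, a smooth point of $C_t$ carries a vertical tangent precisely when ${\partial f\over\partial y}=0$, so the vertical tangents are the points of $C_t\cap\{{\partial f\over\partial y}=0\}$. By Bézout this gives a finite cover $N=\overline{\{(t,p):p\in C_t,\ {\partial f\over\partial y}(p,t)=0\}}\subset Par_t\times P^2$ of $Par_t$ of degree $n(n-1)$, this being the class of a nonsingular curve of degree $n$ (Remark \ref{class}); and because $[0:1:0]$ is off $C_{t'}$ and generic with respect to it, for $t'\in(\mathcal{V}_\infty\cap Par_t)\setminus\{\infty\}$ the $n(n-1)$ vertical tangents are distinct. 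By summability of specialisation and factoring multiplicity (Lemma 2.6 of \cite{dep5}), applied to $N$ over the fibre at $\infty$, the total multiplicity of vertical tangents specialising into $C_\infty$ is $n(n-1)$.

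It then remains to count how many lie in the monad $\mathcal{V}_q$ of a fixed node $q=l_j\cap l_{j'}$. Setting $u=l_j$, $v=l_{j'}$ and $h=\prod_{k\neq j,j'}l_k$, I would write $f=huv+\delta G$ near $q$, where $\delta$ is the infinitesimal parameter measuring $t'\in\mathcal{V}_\infty\setminus\{\infty\}$ and $G$ is the first-order deformation; on $\{f=0\}$ this reads locally as $uv=\epsilon$ with $\epsilon=-\delta G(q)/h(q)$, and $\epsilon\neq0$ because $C_{t'}$ is nonsingular, so the node is genuinely smoothed. Let $b_1,b_2$ be the $y$-coefficients of $u,v$; non-verticality gives $b_1,b_2\neq0$. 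To leading order the vertical-tangent equation ${\partial f\over\partial y}=h(b_1v+b_2u)+O(\epsilon)=0$ forces $b_1v+b_2u=0$, and substituting into $uv=\epsilon$ yields the quadratic $u^2=-b_1\epsilon/b_2$, with exactly two distinct infinitesimal roots $u=\pm\sqrt{-b_1\epsilon/b_2}$. Both corresponding points lie in $\mathcal{V}_q$ and specialise to $q$, so exactly two vertical tangents accumulate at each node.

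Summing over the $C^{n}_{2}$ nodes gives $2\,C^{n}_{2}=n(n-1)$ vertical tangents, which already equals the full degree of $N$; hence these exhaust every vertical tangent, none leaks to a smooth point of a single line, to a bitangent point, or to the line at infinity, and exactly two specialise to each $l_j\cap l_{j'}$, as claimed. The main obstacle is making the local smoothing model $uv=\epsilon$ rigorous inside $\mathcal{V}_q$ — absorbing the unit factor $h$, confirming that $\epsilon$ is a nonzero infinitesimal for every $t'\in\mathcal{V}_\infty\setminus\{\infty\}$, and checking that the subleading terms do not alter the root count — together with the verification, via non-verticality of the $l_j$ and the genericity of $[0:1:0]$, that no vertical tangent escapes to infinity; once these are secured, the quadratic count and the conservation of number close the argument.
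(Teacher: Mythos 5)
Your proposal is correct in substance, but it takes a genuinely different route from the paper's own proof. Both arguments share the same global skeleton: the class count $n(n-1)$ for the vertical-tangent cover of $Par_{t}$ (the paper gets it from the degree--genus formula and $class(C_{t'})=2(g-(1-n))$, you get it from B\'ezout applied to $f=\partial f/\partial y=0$), together with the observation that $2\cdot C^{n}_{2}=n(n-1)$ closes the count. They differ in how the per-node input is obtained. The paper invokes the asymptotic-degeneration machinery of \cite{dep2}: Lemmas 3.44 and 4.3(iv)(d) there show that vertical tangents of $C_{t'}$ specialise precisely to the nodes of $C_{\infty}$, with at least one per node, and the ``flashes'' $\eta_{j,t},\eta'_{j,t}$ produced by Newton's theorem along $l_{a}=\{x=0\}$ and $l_{b}=\{x=1\}$ --- that is, the harmonic bitangency structure encoded in $W^{4n}$ --- force multiplicity at least $2$ at every node; a pigeonhole argument (if some node had multiplicity $\geq 3$, then, the total being $n(n-1)$ spread over ${n(n-1)\over 2}$ nodes, some node would have multiplicity exactly $1$) then yields exactly $2$. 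You instead extract the per-node count from the classical local model $uv=\epsilon$ of a smoothed node and use the global count only to rule out leakage; notably, your argument never uses the harmonic hypothesis (the fixed tangencies on $l_{a},l_{b}$), so it proves the statement for \emph{any} asymptotic family of nonsingular curves degenerating to $n$ lines in general position --- more elementary and more general, at the price of the local-analytic rigour you flag as the main obstacle. That obstacle is real but closable, and there is a cleaner route than estimating the subleading terms of your quadratic: since $C_{t'}$ is nonsingular, $C_{t'}\cap B(q,\epsilon)$ is a complex annulus for suitable standard $\epsilon$ (this is exactly the picture the paper itself uses in footnote \ref{loop} of Lemma \ref{harmonic7}), and since neither $l_{j}$ nor $l_{j'}$ is vertical, $pr_{x}$ restricts to a proper degree-$2$ map of this annulus onto a disc about $x_{q}$; Riemann--Hurwitz ($0=2\cdot 1-R$) gives exactly $R=2$ ramification points, which for a degree-$2$ map are automatically simple and lie over distinct branch points. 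This disposes in one stroke of the delicate case where the first-order deformation vanishes at $q$ (your $G(q)=0$), in which the discriminant of your leading-order quadratic is not obviously nonzero, and it also recovers the distinctness of the two $x$-coordinates asserted later in Lemma \ref{harmonic8}. With that replacement, or with your quadratic carried out after Weierstrass preparation, your plan is a complete and valid proof.
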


\begin{proof}
The family $\{C_{t}:t\in Par_{t}\}$ is a particular form of asymptotic degeneration, for which the methods of \cite{dep2} apply. By Lemmas 3.44 and 4.3(iv)(d) of \cite{dep2}, (see notation there), if $t_{\infty}'\in((\mathcal{V}_{\infty}\cap Par_{t})\setminus\{\infty\})$, and $(x_{0},y_{0})\in (l_{j'}\cap l_{j})$, for some $1\leq j<j'\leq n$, then we can find $C_{i,j,\infty}$ and $C_{i',j',\infty}$, for some $1\leq i\leq i'\leq t$, such that $(x_{0},y_{0},z_{0}))\in (C_{i,j,\infty}\cap C_{i',j',\infty'})$, for some $z_{0}\in A^{1}$, and $(x_{1},y_{1},z_{1}))\in (C_{j,t'_{\infty}}\cap C_{j',t'_{\infty}})$, specialising to $(x_{0},y_{0},z_{0})$, where $t'_{\infty}\in\mathcal{V}_{\infty}$. By Lemma 3.44 of \cite{dep2}, as $C_{t_{\infty}'}$ is nonsingular, the corresponding $(x_{1},y_{1})$ defines a vertical tangent of $C_{t_{\infty}'}$. Conversely, if $(x_{1},y_{1})$, defines a vertical tangent of $C_{t_{\infty}'}$, specialising to $(x_{0},y_{0})$, then, by Lemmas 3.44 and 4.3(iii)(b) of \cite{dep2}, there exists a corresponding $(x_{1},y_{1},z_{1})\in (C_{j,t_{\infty}',s_{\infty}'}\cap C_{j',t_{\infty}',s_{\infty}'})$, hence, using Lemma 4.3(iv)(d) of \cite{dep2}, $((x_{0},y_{0})\in (l_{j'}\cap l_{j})$, for some $1\leq j'\neq j\leq n$, $(*)$.\\

Let $W\subset {Par_{t}\setminus\{t_{\infty}\}}\times P^{2}$ be defined by;\\

$W(t,x',y')\equiv [f(t,x',y')\wedge {\partial f_{t}\over\partial x}(x',y')=0]$\\

and let $\overline{W}\subset Par_{t}\times P^{2}$ define the Zariski closure. By $(*)$, we have that the fibre $\overline{W}(t_{\infty}')$ consists of exactly the points $\{(t_{\infty},x_{j,j'},y_{j,j'}):1\leq j<j'\leq n\}$. Suppose, for contradiction, that, for some $(t_{\infty},x_{j_{0},j'_{0}},y_{j_{0},j'_{0}})$, $Mult(\overline{W}/Par_{t})_{(t_{\infty},x_{j_{0},j'_{0}},y_{j_{0},j'_{0}})}\geq 3$.  By the degree-genus formula, Theorem 3.36,  and Severi's Definition 3.33 of genus $g$, see also Theorem 3.36(\dag), in \cite{dep1}, we have that, for $t'\in (Par_{t}\setminus t'_{\infty})$, $g(C_{t'})={(n-1)(n-2)\over 2}$, and $class(C_{t'})=2(g-(1-n))$, hence, $class(C_{t'})=n(n-1)$, $(\dag)$. It follows, using $(*)$, as $Card(\overline{W}(t_{\infty}'))={n(n-1)\over 2}$, that there must exist $(t_{\infty},x_{j_{1},j'_{1}},y_{j_{1},j'_{1}})$, with $Mult(\overline{W}/Par_{t})_{(t_{\infty},x_{j_{1},j'_{1}},y_{j_{1},j'_{1}})}=1$, $(!!!!)$, (\footnote{\label{complex} Let $C^{*}$ denote the dual of $C=C_{0}$, then $deg(C^{*})=cl(C)=n(n-1)$, $cl(C^{*})=deg(C)=n$, using Lemma 5.12 of \cite{dep1} and $(\dag)$ above. Moreover, by Theorem 5.1 of \cite{dep1}, using the fact that $C$ is nonsingular, $i(C^{*})=0$, that is $C^{*}$ has no flexes, $(**)$. Then, using Theorem 4.3 of \cite{dep1};\\

$n+m+2d=n^{2}$ (this generalises to $n+m+(2d+3d'+4d''+\ldots)=n^{2}$, where $\{d,d',d'',\ldots\}$, denotes the number of nodes, triple, quadruple,... $k$-branches (assuming each branch $\gamma$ has $\alpha(\gamma)=1$, we can assume this in the case of $C^{*}$, by $(**)$.   Hence;\\

$(2d+3d'+4d''+\ldots)=n(n-1)-n=n(n-2)$\\

$2(d+d'+d''+\ldots)\leq n(n-2)$\\

$(d+d'+d''+\ldots)\leq {n(n-2)\over 2}$  $(***)$\\

We can add notation to allow for bitangents,tritangents, $k$-tangents,  at inflexions etc, $d_{i_{2},\alpha_{1,i_{2}},\alpha_{2,i_{2}}}$, $d'_{i_{3},\alpha_{1,i_{3}},\alpha_{2,i_{3}},\alpha_{3,i_{3}}}$, and obtain;\\

$(2d+3d'+4d''+\ldots)+(\sum_{i_{2}}(\alpha_{1,i_{2}}+\alpha_{2,i_{2}}))+(\sum_{i_{3}}(\alpha_{1,i_{3}}+\alpha_{2,i_{3}}+\alpha_{3,i_{3}}))+\ldots$\\

$=n(n-1)-n=n(n-2)$\\

If $C$ has no tritangents, we obtain $d={n(n-2)\over 2}$ bitangents, and $d\geq n$, if $n\geq 4$.})\\

Without loss of generality we can assume that the intersection $l_{a}\cap l_{b}$ corresponds to the point $[0:1:0]$ in the coordinate system $x={X\over Z}$, $y={Y\over Z}$, and $l_{a}$ is given by $x=0$, $l_{b}$ is given by $x=1$. The arguments in the paper \cite{dep2}, see especially Lemma 3.44 and Theorem 4.3, apply to the given asymptotic degeneration, with distinct, (\footnote{We can assume that $\eta'_{j,t}(x+1)\neq\eta_{k,t}(x)$, for $1\leq j,k\leq n$, by, wlog, obtaining, using the above argument, that the bitangent $y$-coordinates $\{y(p_{1,1}),\ldots,y(p_{1,n})\}$ are distinct from $\{y(p_{2,1}),\ldots,y(p_{2,n})\}$.}) flashes $\{\eta_{1,t},\ldots,\eta_{n,t}\}$, and $\{\eta'_{1,t},\ldots,\eta'_{n,t}\}$, obtained from applying Newton's theorem along the lines $x=0$ and $x=1$. It follows that, for all $t\in U\subset Par_{t}$, the flashes $\bigcup_{1\leq j\leq n}\eta_{j,t}$ and $\bigcup_{1\leq j\leq n}\eta'_{j,t}$ intersect in finitely many points. Now applying the argument $(*)$, we obtain that, for $\overline{W}$ as above, that, $Mult(\overline{W}/Par_{t})\geq 2$, contradicting $(!!!!)$.\\
\end{proof}

\begin{lemma}
\label{harmonic8}
Let $C$ be a harmonic curve, then there exists a linear system $L$, with $C_{\overline{l}_{0}}=C$, and $C_{\overline{l}_{\infty}}=C_{lines}$, where $C_{lines}$ is a harmonic arrangement. Then, if $\overline{l}\in{\mathcal{V}_{\overline{l}_{\infty}}\setminus\{\overline{l}_{\infty}\}}$, and $p_{i,j}=(l_{i}\cap l_{j})$, $p_{i,j}=(x_{i,j},y_{i,j})$, there exist $\{z^{k}_{i,j}:1\leq k\leq 2\}\subset{\mathcal{V}}_{p_{i,j}}$, with $pr_{x}(z^{k}_{i,j})=x^{k}_{i,j}$ distinct, $z^{k}_{i,j}=(x^{k}_{i,j},y^{k}_{i,j})$, such that $I_{z^{k}_{i,j}}(C_{\overline{l}},x=x^{k}_{i,j})=2$, and, if $z\in C_{\overline{l}}\cap {\mathcal{V}}_{p_{i,j}}$, with $pr_{x}(z)=x^{k}_{i,j}$, then $z=z^{k}_{i,j}$,and, for all $x'\in{\mathcal{V}_{x_{i,j}}\setminus \{x^{k}_{i,j}\}}$, there exist exactly two  $\{z_{x}^{t}:1\leq t\leq 2\}\subset C_{\overline{l}}\cap\mathcal{V}_{p_{i,j}}$, with $pr_{x}(z_{x}^{t})=x'$. If $\overline{l}\in{\mathcal{V}_{\overline{l}_{\infty}}\setminus\{\overline{l}_{\infty}\}}$, and $p\in {C_{lines}\setminus \{p_{i,j}:1\leq i<j\leq n\}}$, $p=(x_{p},y_{p})$, then, for all $x'\in{\mathcal{V}}_{x_{p}}$, there exists a unique $z'\in C_{\overline{l}}\cap{\mathcal{V}}_{p}$, with $pr_{x}(z')=x'$. Finally, if $\overline{l}\in{\mathcal{V}_{\overline{l}_{\infty}}\setminus\{\overline{l}_{\infty}\}}$, and $z\in C_{\overline{l}}$, there exists $p\in C_{lines}$, with $z\in\mathcal{V}_{p}$.
\end{lemma}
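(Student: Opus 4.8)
The plan is to reduce the global statement to a purely local analysis, carried out inside each monad $\mathcal{V}_p$ with $p\in C_{lines}$, of the projection $pr_x\colon C_{\overline{l}}\to P^1$ onto the $x$-coordinate. First I would fix the coordinate system of Remark \ref{harmonic5}, in which $l_a$ is $x=0$, $l_b$ is $x=1$, and $l_a\cap l_b=[0:1:0]$; in this frame a vertical line $x=c$ is precisely a line through $l_a\cap l_b$, so a vertical tangent of $C_{\overline{l}}$ is exactly a ramification point of $pr_x$, and the count of Lemma \ref{harmonic} becomes a statement about the branch points of this projection. Since $[0:1:0]=l_a\cap l_b$, general position forces each bitangent line $l_j$, $1\le j\le n$, to avoid $[0:1:0]$ and hence to be non-vertical; thus every branch of $C_{lines}=\bigcup_{j}l_j$ is transverse to the vertical foliation in the finite part. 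Throughout, $C_{\overline{l}}$ is nonsingular because the harmonic variation consists of nonsingular curves away from $\overline{l}_\infty$.

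The core is the local model at an intersection point $p_{i,j}=l_i\cap l_j$. There $C_{lines}$ is a transverse union of two non-vertical lines, so $pr_x$ restricted to $C_{lines}\cap\mathcal{V}_{p_{i,j}}$ has local degree $2$; by conservation of multiplicity (summability of specialisation in the ambient Zariski structure, as in \cite{dep2}) the restriction of $pr_x$ to $C_{\overline{l}}\cap\mathcal{V}_{p_{i,j}}$ again has degree $2$ over $\mathcal{V}_{x_{i,j}}$. As $C_{\overline{l}}$ is nonsingular, this local degree-$2$ cover must, after smoothing the node, be an irreducible branch of the form $w^2=(x-x^1_{i,j})(x-x^2_{i,j})$ to leading order, which I would make precise through the flashes $\eta_{\cdot,t},\eta'_{\cdot,t}$ of \cite{dep2}, showing that exactly the two flashes meeting at $p_{i,j}$ reconnect into a single smooth branch. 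Lemma \ref{harmonic} supplies exactly two vertical tangents in $\mathcal{V}_{p_{i,j}}$, i.e. two branch points; a degree-$2$ cover can only ramify simply, so each branch point $z^k_{i,j}$ has a single preimage with $I_{z^k_{i,j}}(C_{\overline{l}},x=x^k_{i,j})=2$, while every other $x'\in\mathcal{V}_{x_{i,j}}$ has precisely two preimages $\{z^t_x\}$. The distinctness $x^1_{i,j}\ne x^2_{i,j}$ is forced by irreducibility of the smoothed branch (two branch points of a connected double cover cannot collapse without the cover splitting), and can also be read off from the distinctness of the flashes noted in the proof of Lemma \ref{harmonic}. This yields every assertion of the first part, including the uniqueness clause.

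For a point $p\in C_{lines}\setminus\{p_{i,j}:1\le i<j\le n\}$ — in particular each tangent point $p_{1,j},p_{2,j}$, which lies on the single line $l_j$ — general position shows $p$ sits on one non-vertical line, so $pr_x$ has local degree $1$ on $C_{lines}\cap\mathcal{V}_p$ and, by the same conservation of multiplicity, on $C_{\overline{l}}\cap\mathcal{V}_p$; hence for every $x'\in\mathcal{V}_{x_p}$ there is a unique $z'\in C_{\overline{l}}\cap\mathcal{V}_p$ with $pr_x(z')=x'$, and no vertical tangent occurs there. Finally, the surjectivity statement that every $z\in C_{\overline{l}}$ lies in some $\mathcal{V}_p$ with $p\in C_{lines}$ I would obtain from continuity of specialisation: as $P^2$ is complete the standard part $st(z)$ exists, and passing to the limit in $f(\overline{l},z)=0$ as $\overline{l}\approx\overline{l}_\infty$ gives $f(\overline{l}_\infty,st(z))=0$, i.e. $st(z)\in C_{lines}$; taking $p=st(z)$ closes the argument. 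The degree check $2\binom{n}{2}=n(n-1)$ against $(\dag)$ of Lemma \ref{harmonic} then confirms that all vertical tangents are accounted for at the intersection points and none are hidden at the smooth points.

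The main obstacle is the second step: rigorously transferring the local degree and ramification data across the specialisation $\overline{l}\to\overline{l}_\infty$. One must show that the two branches of the limiting node genuinely reconnect into one irreducible smooth branch of $C_{\overline{l}}$ with simple ramification, rather than persisting as two sheets (which would leave no vertical tangent) or acquiring higher tangency; this is exactly where the flash machinery and summability of specialisation of \cite{dep2} must be invoked with care, and where the nonsingularity of $C_{\overline{l}}$ together with the exact count from Lemma \ref{harmonic} do the decisive work. Once the local degree-$2$ model with two distinct simple branch points is secured, the remaining bookkeeping — fibre uniqueness, the two-preimage count, and the standard-part surjectivity — is routine.
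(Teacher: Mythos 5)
Your proposal follows essentially the same route as the paper's proof: both reduce the lemma to the local structure of the vertical projection $pr_{x}$ in each monad, transfer the local degree ($2$ at the nodes $p_{i,j}$, $1$ at smooth points of $C_{lines}$) from $C_{lines}$ to $C_{\overline{l}}$ by summability of specialisation --- the paper does this via the incidence cover $F(p,\overline{l},x')\equiv p\in C_{\overline{l}}\cap \{x=x'\}$ and exclusion of the multiplicities $\geq 3$ and $=1$ --- and then use the exact count of two vertical tangents from Lemma \ref{harmonic} to pin down the simple ramification structure, with the final claim obtained from closedness of the incidence variety under specialisation (your standard-part argument is the same thing). The only cosmetic difference is your appeal to the flash machinery for a local normal form, where the paper simply runs the multiplicity case analysis on $F$.
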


\begin{proof}
The existence of $L$ follows from the proof of footnote \ref{variation}. By Lemma \ref{harmonic}, if $\overline{l}\in{\mathcal{V}_{\overline{l}_{\infty}}\setminus\{\overline{l}_{\infty}\}}$, there exist exactly two vertical tangents $\{z^{k}_{i,j}:1\leq k\leq 2\}\subset{\mathcal{V}}_{p_{i,j}}$. Consider the cover $F\subset P^{2}\times L\times P^{1}$, defined by;\\

$F(p,\overline{l},x')\equiv (p\in C_{\overline{l}}\cap x=x')$\\

We claim that $Mult_{(p_{i,j},lines,x_{i,j})}(F/L\times P^{1})=2$, $(*)$. [Considering the $g^{1}_{n}$, on $x=x_{i,j}$, we have, if $p_{i,j}\notin Base(g^{1}_{n})$, and, using Lemma 2.10 of \cite{dep3}, that, $pr_{x}(z^{k}_{i,j})\cap \{x_{i,j}\}=\emptyset$, hence, we can assume that $\{x^{k}_{i,j}:1\leq k\leq 2\}\cap\{x_{i,j}\}=\emptyset$, or say $z^{1}_{i,j}=p_{i,j}$, $(**)$ , $(\dag)$]. Considering the $g^{1}_{n}$, on $x=x_{i,j}$, we have that $I_{p_{i,j}}(C_{lines},x=x_{i,j})=2$, hence, we have that there exist at most $2$ points $\{p'_{i,j},p''_{i,j}\}\subset (\mathcal{V}_{p_{i,j}}\cap C_{\overline{l}})$, with $pr_{x}(p'_{i,j})=pr_{x}(p''_{i,j})=x_{i,j}$. If $Mult_{(p_{i,j},lines,x_{i,j})}(F/L\times P^{1})\geq 3$, then, using summability of specialisation, we have that;\\

$Mult_{(p'_{i,j},\overline{l},x_{i,j})}(F(\overline{l})/P^{1})+Mult_{(p''_{i,j},\overline{l},x_{i,j})}(F(\overline{l})/P^{1})\geq 3$\\

Hence, wlog $I_{p'_{i,j}}(C_{\overline{l}},x=x_{i,j})\geq 2$, implying that $p'_{i,j}$, $(?)$, is a vertical tangent. This contradicts the assumption $(\dag)$, if $(**)$ fails,  as $pr_{x}(p'_{i,j})=x_{i,j}$. If $(**)$ holds, then we must have that $p'_{i,j}=p''_{i,j}=p_{i,j}$, and $I_{p_{i,j}}(C_{\overline{l}},x=x_{i,j})\geq 3$, giving $I_{p_{i,j}}(C_{lines},x=x_{i,j})\geq 3$, which is not the case. If $Mult_{(p_{i,j},lines,x_{i,j})}(F/L\times P^{1})=1$, then clearly, again by summability, $Mult_{(p_{i,j},lines,x_{i,j})}(F(lines)/P^{1})=1$, contradicting the fact that $I_{p_{i,j}}(C_{lines},x=x_{i,j})=2$, giving $(*)$. Suppose that $x'\in ({\mathcal{V}_{x_{i,j}}\setminus\{x^{k}_{i,j}\}})$, and there exists a single $z\in(\mathcal{V}_{p_{i,j}}\cap C_{\overline{l}})$, with $pr_{x}(z)=x'$. As $I_{z}(C_{\overline{l}},x=x')=1$, we have, for generic $(\overline{l},x'')\in\mathcal{V}_{(\overline{l},x)}$, that there exists a single $z\in(\mathcal{V}_{p_{i,j}}\cap C_{\overline{l}})$, with $pr_{x}(z)=x''$, contradicting $(*)$. Similarly, we can exclude $Card(\mathcal{V}_{p_{i,j}}\cap C_{\overline{l}}\cap pr_{x}^{-1}(x'))\geq 3$, in which case, we obtain that $Mult_{(p_{i,j},lines,x_{i,j})}(F/L\times P^{1})\geq 3$, contradicting $(*)$. If, $Card(C_{\overline{l}}\cap pr^{-1}(x^{k}_{i,j})\cap\mathcal{V}_{p_{i,j}})\geq 2$, then, as $I_{z^{k}_{i,j}}(C_{\overline{1}},x=x^{k}_{i,j})=2$, we have that
$Mult_{(p_{i,j},lines,x_{i,j})}(F/L\times P^{1})\geq 2+1=3$, contradicting $(*)$, in particular it follows that $x^{1}_{i,j}\neq x^{2}_{i,j}$. We have that $Mult_{p,lines,x_{p}}(F/L\times P^{1})=1$, as considering the $g_{n}^{1}$ on $x=x_{p}$, and, using the fact that $\{x=x_{p},l_{j}\}$ intersect transversely, where $x_{p}\in l_{j}$, we have that there exists a unique $y''\in{\mathcal{V}_{y_{p}}}$, with $(x_{p},y'')\in C_{\overline{l}}\cap (x=x_{p})$. As $(x_{p},y'')$ does not define a vertical tangent, we have that $I_{(x_{p},y'')}(C_{\overline{l}},x=x_{p})=1$, hence, for generic $(\overline{l},x')\in\mathcal{V}_{(lines,x_{p})}$, there exists a unique $y'\in{\mathcal{V}_{y_{p}}}$ with $(x',y'')\in C_{\overline{l}}\cap x=x'$. Taking $z'=(x',y')$ gives the required result. To see the final part, observe that the variety $Z=\{(\overline{l}',z')\in L\times P^{2},z\in C_{\overline{l}}\}$ is closed, hence, if $(\overline{l},z)\in Z$, with $\overline{l}\in\mathcal{V}_{\overline{l}_{\infty}}$, then its specialisation $(\overline{l}_{\infty},p)\in Z$ as well. By definition, $p\in C_{lines}$.

\end{proof}

\begin{lemma}
\label{harmonic7}
Let hypotheses be as in Lemma \ref{harmonic8}, and assume that $n$ is odd, then there exists an $\epsilon''>0$, and a ball $B(\overline{l}_{\infty},\epsilon'')$ such that for all $\overline{l}\in {(B(\overline{l}_{\infty},\epsilon'')\cap L)\setminus \{\overline{l}_{\infty}\}}$, $C_{\overline{l}}$ is topologically equivalent to a sphere with $g$ attached handles, where
$g={(n-1)(n-2)\over 2}$. In particular, Severi's definition of genus $g$ coincides with the topological definition, see \cite{dep1}.

\end{lemma}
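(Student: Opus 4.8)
The plan is to read a handle decomposition of $C_{\overline{l}}$ directly off the local fibre analysis of Lemma \ref{harmonic8}, compute its Euler characteristic, and then invoke the classification of closed orientable surfaces to recover the genus. First I would pass from the infinitesimal statements of Lemma \ref{harmonic8} to a standard punctured ball: by the underflow principle and transfer (exactly as in footnote \ref{move1}), the structural conclusions that hold for all $\overline{l}\in(\mathcal{V}_{\overline{l}_{\infty}}\setminus\{\overline{l}_{\infty}\})$ in fact hold on $(B(\overline{l}_{\infty},\epsilon'')\cap L)\setminus\{\overline{l}_{\infty}\}$ for some standard $\epsilon''>0$. Fix such an $\overline{l}$. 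Since $C_{\overline{l}}$ is a nonsingular plane projective curve it is a closed orientable real surface, and it is connected because the dual graph of the arrangement $C_{lines}$ (the lines $l_{j}$ as vertices, the ${n(n-1)\over 2}$ nodes $p_{i,j}$ as edges) is the complete graph, so the smoothing cannot disconnect.

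Next I would cut $C_{\overline{l}}$ into standard pieces governed by proximity to $C_{lines}$. Choose disjoint coordinate balls $B_{i,j}$ about each node $p_{i,j}=(l_{i}\cap l_{j})$. Over $B_{i,j}$, the projection $pr_{x}$ restricted to $C_{\overline{l}}\cap\mathcal{V}_{p_{i,j}}$ is, by Lemma \ref{harmonic8}, a two-sheeted branched cover of a disk in the $x$-line, with exactly the two branch points $x^{1}_{i,j}\neq x^{2}_{i,j}$ (the vertical tangents, each of contact $2$) and exactly two points over every other fibre; by Riemann--Hurwitz this piece has Euler characteristic $2\cdot 1-2=0$ and is a connected annulus joining the two sheets, that is, precisely the smoothing of a node. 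Over the complement, the second and final clauses of Lemma \ref{harmonic8} give that $C_{\overline{l}}$ is a single unramified sheet covering each line $l_{j}\cong P^{1}(\mathcal{C})$ away from its $n-1$ nodes, so each line contributes a copy of $S^{2}$ with $n-1$ open disks removed, of Euler characteristic $2-(n-1)=3-n$; the last part of Lemma \ref{harmonic8} guarantees that these pieces exhaust $C_{\overline{l}}$.

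Since the annular and line pieces are glued along circles, Euler characteristic is additive, and I would compute
\[
\chi(C_{\overline{l}})=n(3-n)+{n(n-1)\over 2}\cdot 0=3n-n^{2}.
\]
As $C_{\overline{l}}$ is a closed connected orientable surface, $\chi=2-2g$, whence $g={2-(3n-n^{2})\over 2}={(n-1)(n-2)\over 2}$. Comparing this with the value $g={(n-1)(n-2)\over 2}$ furnished by the degree-genus formula of \cite{dep1} and already used in Lemma \ref{harmonic} then yields that Severi's algebraic genus agrees with the topological genus, which is the assertion.

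I expect the main obstacle to be the second paragraph: promoting the purely local, infinitesimal fibre data of Lemma \ref{harmonic8} into an honest global handle decomposition valid on a standard ball. One must verify that the two sheets identified by each nodal annulus are genuinely the sheets lying over the \emph{two distinct} lines through $p_{i,j}$ (so that the vanishing cycle plumbs the correct pair of line-spheres rather than self-gluing one sheet), and that the boundary circles of the $2\cdot{n(n-1)\over 2}=n(n-1)$ annular ends match the $n(n-1)$ removed-disk boundaries bijectively. Controlling the orientation and the monodromy of the vanishing cycle uniformly over the whole punctured ball, rather than merely at a single nonstandard parameter, is where the transfer and summability-of-specialisation machinery of the preceding lemmas carries the real weight.
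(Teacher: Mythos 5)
Your proposal is correct in outline, but it takes a genuinely different route from the paper. Both proofs begin identically: underflow/transfer is used to promote the infinitesimal conclusions of Lemma \ref{harmonic8} to a standard punctured ball $(B(\overline{l}_{\infty},\epsilon'')\cap L)\setminus\{\overline{l}_{\infty}\}$, and both read off the same local models (a double cover of a disc ramified at two points near each node $p_{i,j}$, a single unramified sheet over $C_{lines}$ elsewhere). After that the paper never computes an Euler characteristic and never invokes the classification of surfaces: it builds the homeomorphism type directly, assembling the curve handle by handle following the concentric-circle geometry of Lemmas \ref{equal} and \ref{positions} --- chains of small balls along the segments joining consecutive nodes $p_{i,i+1}$ give an $n$-holed torus $T^{n}_{1}$, adjoining the nodes $p_{i,i+2}$ on the next concentric circle gives $T^{n}_{1,n}$, and after ${n-3\over 2}$ such sweeps one reaches $T^{n}_{1,g-1}$, which is finally capped off by $n$ discs at infinity after a coordinate change. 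Your route instead decomposes the curve into nodal annuli and punctured line-spheres, computes $\chi = n(3-n) = 3n-n^{2}$, and quotes the classification of closed orientable surfaces; this is essentially the argument of 4.1.2 of \cite{K}, which the paper explicitly cites in Remark \ref{degreegenus} as \emph{another} proof. What your approach buys is brevity and robustness: Euler characteristic additivity is insensitive to exactly how the annulus boundaries are matched to the removed discs, so the monodromy and orientation worries in your final paragraph matter only for connectedness --- and connectedness is automatic, since a nonsingular plane projective curve is irreducible (two components would meet by Bezout, producing a singular point), so you need neither the dual-graph argument nor the sheet-matching analysis. Your computation also never uses the harmonic (concentric-circle) structure, only the combinatorial count of lines and nodes, so it would apply to any such degeneration. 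What the paper's approach buys is an explicit handle decomposition organized by the alcove structure of Section \ref{alcoves}, which is the point of the paper's thematic claim that the handles, like the class points, are distributed across the concentric annular regions; it also avoids appealing to the classification theorem. One small point to patch in your write-up, which the paper also flags (in a footnote): the claim that each line contributes a sphere with $n-1$ discs removed requires handling the point at infinity of each $l_{j}$, where the affine projection $pr_{x}$ of Lemma \ref{harmonic8} says nothing; the paper inserts a coordinate change putting the $n$ points $p_{i,\infty}$ in finite position, and your argument needs the same remark.
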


\begin{proof}
Using the result of Lemma \ref{harmonic8}, and Theorem 17.1 of \cite{dep4}, we have, for all infinite $n\in {^{*}\mathcal{N}}$ and $\delta'>0$ standard, that
 the statements $D(n,p_{i,j}),E(n,\delta'), F(n)$ hold;\\

$D(n,p_{i,j})\equiv (\forall{\overline{l}}\in B(\overline{l}_{\infty},{1\over n}))(\forall x\in B(x_{i,j},{1\over n}))[x\notin S(\overline{l})\rightarrow \exists^{=2}y((x,y)\in$\\

\indent \ \ \ \ \ \ \ \  $C_{\overline{l}}\cap B(p_{i,j},{1\over n}))\wedge x\in S(\overline{l})\rightarrow \exists^{=1}y((x,y)\in C_{\overline{l}}\cap B(p_{i,j},{1\over n}))]$\\

$F(n)\equiv (\forall{\overline{l}}\in B(\overline{l}_{\infty},{1\over n}))(\forall z\in C_{\overline{l}})(\exists{=1}w\in C_{lines})(z\in B(w,{1\over n}))$\\

$E(n,\delta')\equiv (\forall{\overline{l}}\in B(\overline{l}_{\infty},{1\over n}))(\forall z\in {C_{lines}\setminus\bigcup_{1\leq i<j\leq n'}B(p_{i,j},\delta')})(\forall x'\in B(x_{z},{1\over n}))(\exists^{=1}y((x',y)\in C_{\overline{l}}\cap B(z,{1\over n})))$\\

where $S$ is defined by $S(\overline{l},x')\equiv (\exists y')(f(x',y',{\overline{l}})=0\wedge{\partial f\over \partial x}(x',y',,{\overline{l}})=0)$   By underflow, see \cite{cut}, the statements hold for all $n\in\mathcal{N}$, with $n\geq n_{0}$. In particular, taking $\epsilon>0$, such that $B(p_{i,j},\epsilon)\cap \{p_{i',j'}:(i',j')\neq (i,j)\}=\emptyset$, we obtain that;\\

(i). For all $\overline{l}\in B'(\overline{l}_{\infty},\epsilon''')$, $pr_{x}:(C_{\overline{l}}\cap B(p_{i,j},\epsilon'''))\rightarrow B(x_{i,j},\epsilon''')$ is a double cover, ramified at two distinct points $\{z^{1}_{i,j}(\overline{l}),z^{2}_{i,j}(\overline{l})\}$, $\epsilon'''\leq\epsilon$, (\footnote{\label{loop} Observe that we can then find a closed loop $S_{i,j,\overline{l}}\subset (C_{\overline{l}}\cap B(p_{i,j},\epsilon'''))$, passing through $\{z^{1}_{i,j}(\overline{l}),z^{2}_{i,j}(\overline{l})\}$, such that $(C_{\overline{l}}\cap B(p_{i,j},\epsilon'''))\cong Ann^{1}\sqcup_{S_{i,j,\overline{l}}} Ann^{2}$, where $\{Ann^{1},Ann^{2}\}$ are complex annuli joined along the loop $S_{i,j,\overline{l}}$.}).\\

(ii). For all $\overline{l}\in B'(\overline{l}_{\infty},\epsilon''')$, and $p\in C_{\overline{l}}$, there exists a unique $w\in C_{lines}$, with $p\in B(w,\epsilon''')$, $\epsilon'''\leq\epsilon$.\\

(iii). For $p\in({C_{lines}\setminus\bigcup_{1\leq i<j\leq n}B(p_{i,j},\epsilon)})$,  there exists $\epsilon'>0$, such that, for all $\overline{l}\in B'(\overline{l}_{\infty},\epsilon''')$, $\epsilon'''\leq\epsilon'$, $pr_{x}:(C_{\overline{l}}\cap B(p,\epsilon'''))\rightarrow B(x_{p},\epsilon''')$ is an isomorphism.\\

Observing that $|p_{i,i+1}-p_{i+1,i+2}|=2sin({2\pi\over n})$, we let $\epsilon''={1\over 3}min(\epsilon,\epsilon')$ and $m_{\epsilon''}=[{2sin({2\pi\over n})+\epsilon''\over \epsilon''}]$. For $0\leq i\leq n-1$, $0\leq s\leq m_{\epsilon''}-1$ we let $p_{i,i+1,s}=p_{i,i+1}+s({p_{i+1,i+2}-p_{i,i+1}\over m_{\epsilon''}})$, and $S=\{s:0\leq s\leq m_{\epsilon''}:B(p_{i,i+1,s},\epsilon'')\cap B(p_{i,i+1},\epsilon)=\emptyset\}$. Clearly $S\subset [0,m_{\epsilon''}-1]$ is an interval, and we let $s_{1}=min(S)-1$, $s_{2}=max(S)+1$. Then, for the real line segment $l^{\mathcal{R}}_{p_{i,i+1},p_{i+1,i+2}}=\{tp_{i,i+1}+(1-t)p_{i+1,i+2}:0\leq t\leq 1\}$, we have $l^{\mathcal{R}}_{p_{i,i+1},p_{i+1,i+2}}\subset \bigcup_{s_{1}\leq s\leq s_{2}}B(p_{i,i+1,s},\epsilon'')\cup B(p_{i,i+1},\epsilon)\cup B(p_{i+1,i+2},\epsilon)$. We then have that, for $\overline{l}\in B'(\overline{l}_{\infty},\epsilon'')$;\\

 $(C_{\overline{l}}\cap(\bigcup_{0\leq i\leq n-1}[B(p_{i,i+1},\epsilon)\cup\bigcup_{s_{1}\leq s\leq s_{2}}B(p_{i,i+1,s},\epsilon'')]\cong T_{1}^{n}$\\

where $T_{1}^{n}$ is an $n$-holed torus. Observe that, using Lemma \ref{equal}, for $0\leq i\leq n-1$, $2\leq k\leq {n-1\over 2}$;\\

$|p_{i,i+1}-p_{i,i+k}|={sin({\pi\over 2}(1-{2\over n}))-sin({\pi\over 2}(1-{2k\over n}))\over cos({\pi\over 2}(1-{2\over n}))sin({\pi\over 2}(1-{2k\over n}))}$\\

For $2\leq k\leq {n-1\over 2}$, we let $m_{k,\epsilon''}={|p_{i,i+k}-p_{i,i+k-1}|+\epsilon''\over \epsilon''}$, and, for $0\leq s\leq m_{k,\epsilon''}-1$, we let $p'_{i,i+k-1,s}=p_{i,i+k-1}+s({p_{i,i+k}-p_{i,i+k-1}\over m_{k,\epsilon''}})$. We let $S_{k}=\{s:0\leq s\leq m_{k,\epsilon''}:B(p_{i,i+k-1,s},\epsilon'')\cap B(p_{i,i+k-1},\epsilon)=\emptyset\}$. Again $S_{k}\subset [0,m_{k,\epsilon''}-1]$ is an interval, and we let $s_{k,1}=min(S_{k})-1$, $s_{k,2}=max(S_{k})+1$. Then, for the real line segment $l^{\mathcal{R}}_{p_{i,i+k-1},p_{i,i+k}}=\{tp_{i,i+k-1}+(1-t)p_{i,i+k}:0\leq t\leq 1\}$, we have $l^{\mathcal{R}}_{p_{i,i+k-1},p_{i,i+k}}\subset \bigcup_{s_{k,1}\leq s\leq s_{k,2}}B(p_{i,i+1,s},\epsilon'')\cup B(p_{i,i+k-1},\epsilon)\cup B(p_{i,i+k},\epsilon)$. It is then clear that, for $\overline{l}\in B'(\overline{l}_{\infty},\epsilon'')$;\\

 $(C_{\overline{l}}\cap(\bigcup_{0\leq i\leq n-1}[B(p_{i,i+1},\epsilon)\cup B(p_{i,i+2},\epsilon)\cup\bigcup_{s_{1}\leq s\leq s_{2}}B(p_{i,i+1,s},\epsilon'')\cup\bigcup_{s_{2,1}\leq s\leq s_{2,2}}B(p'_{i,i+1,s},\epsilon'')]))\cong T_{1,n}^{n}$\\

where $T_{1,n}$ is a torus with $n$ attached handles, and $T_{1,n}^{n}$ is a $T_{1,n}$ with $n$-holes. Repeating the process $l$ times, we obtain that, for $\overline{l}\in B'(\overline{l}_{\infty},\epsilon'')$;\\

 $(C_{\overline{l}}\cap(\bigcup_{0\leq i\leq n-1}[B(p_{i,i+1},\epsilon)\cup \bigcup_{2\leq k\leq 2+(l-1)}B(p_{i,i+k},\epsilon)\cup\bigcup_{s_{1}\leq s\leq s_{2}}B(p_{i,i+1,s},\epsilon'')\cup\bigcup_{2\leq k\leq 2+(l-1),s_{k,1}\leq s\leq s_{k,2}}B(p'_{i,i+k,s},\epsilon'')]))\cong T_{1,nl}^{n}$, (\footnote{\label{attachments} At each stage, the loop $S_{i,j,\overline{l}}$, corresponding to the attachment of the new handle around $p_{i,j}$, should be thought of as connecting annuli on the handles corresponding to $\{p_{i,j-1},p_{i+1,j}\}$. The number of holes $n$ is unchanged, as the loop $S_{i,j,\overline{l}}$ blocks any new passages along the surface. We then obtain a $T_{1,n(l-1),n}^{n}$, where $T_{1,n(l-1),n}$ is a $T_{1,n(l-1)}$ with $n$ attached handles. Sliding these attachments to the main body, $T_{1,n(l-1),n}\cong T_{1,nl}$, giving the required $T_{1,nl}^{n}$.}).\\

 Repeating the process $({n-1\over 2}-2)+1={n-3\over 2}$ times, and, using Lemma \ref{positions}, we obtain that, for $\overline{l}\in B'(\overline{l}_{\infty},\epsilon'')$;\\

  $(C_{\overline{l}}\cap(\bigcup_{0\leq i\leq n-1}[B(p_{i,i+1},\epsilon)\cup \bigcup_{2\leq k\leq {n-1\over 2}}B(p_{i,i+k},\epsilon)\cup\bigcup_{s_{1}\leq s\leq s_{2}}B(p_{i,i+1,s},\epsilon'')\cup\bigcup_{2\leq k\leq {n-1\over 2},s_{k,1}\leq s\leq s_{k,2}}B(p'_{i,i+k,s},\epsilon'')]))\cong T_{1,n({n-3\over 2})}^{n}=T_{1,g-1}^{n}$\\

where $g={(n-1)(n-2)\over 2}$.\\

Finally, let $\{p_{i,\infty}:1\leq i\leq n\}$ denote the points at $\infty$ of $C_{lines}$. Changing coordinates to $(x',y')$ with $\{p_{i,\infty}:1\leq i\leq n\}$ in finite position, say at $\{(0,y'_{i}):0\leq i\leq n\}$, we can assume that for all $\overline{l}\in B'(\overline{l}_{\infty},\epsilon_{0})$, $1\leq i\leq n$, $pr_{x'}:C_{\overline{l}}\cap B((0,y'_{i}),\epsilon_{0})\rightarrow B(0,\epsilon_{0})$ is an isomorphism, (\footnote{\label{infinity} Strictly speaking, we should include this coordinate change and the fixed points at infinity in the definitions of $D(n,p_{i,j})$}). For $\overline{l}\in B(lines,\epsilon_{0})$, we let $D'_{\overline{l}}=C_{\overline{l}}\cap (pr_{x'}^{-1})(B(0,\epsilon_{0}))$, $D_{i,\overline{l}}=l_{i}\cap (pr_{x'}^{-1})(B(0,\epsilon_{0}))$, $D'_{i,\overline{l}}=C_{\overline{l}}\cap B((0,y'_{i}),\epsilon_{0})$, so $D_{i,\overline{l}}\cong D'_{i,\overline{l}}$. Choose a standard $\lambda>0$ such that, for $1\leq i\leq n$, $(D(\overline{0},\lambda)\cap D_{i,\overline{l}})\neq\emptyset$, in coordinates $(x,y)$. Then it follows, taking $\epsilon<<\epsilon_{0}$, that, $C_{\overline{l}}= (D(\overline{0},\lambda)\cap C_{\overline{l}})\cup \bigcup_{1\leq i\leq n}D'_{i,\overline{l}}$, for $\overline{l}\in B'(lines,\epsilon)$, $(\dag)$. We can obviously assume that $\bigcup_{1\leq i<j\leq n}B(p_{i,j},\epsilon'')\subset D(\overline{0},\lambda)\subset D(\overline{0},\lambda+\epsilon'')$, and, hence, that;\\

 $\bigcup_{1\leq i\leq n,s_{1}\leq s\leq s_{2}}B(p_{i,i+1,s},\epsilon'')\cup\bigcup_{1\leq i\leq n,2\leq k\leq {n-1\over 2},s_{k,1}\leq s\leq s_{k,2}}B(p'_{i,i+k,s},\epsilon'')$\\

 $\subset D(\overline{0},\lambda)\subset D(\overline{0},\lambda+\epsilon'')$\\

As\\

 ${(\overline{D(\overline{0},\lambda+\epsilon'')}\cap l_{i})\setminus[\bigcup_{1\leq i\leq n,s_{1}\leq s\leq s_{2}}(B(p_{i,i+1,s},\epsilon'')\cap l_{i})\cup}$\\

 $\bigcup_{1\leq i\leq n,2\leq k\leq {n-1\over 2},s_{k,1}\leq s\leq s_{k,2}}(B(p'_{i,i+k,s},\epsilon'')\cap l_{i})]$\\

 is compact, for each $1\leq i\leq n$, we can find a finite set $Q_{i}$, $|Q_{i}|=P$, with $Q_{i}\subset l_{i}\cap \overline{D(\overline{0},\lambda+\epsilon'')}$, distinct from $W_{i}=\{p_{i,i+1,s},p'_{i,i+k,s},p_{i,j}:j\neq i,2\leq k\leq {n-1\over 2},s_{1}\leq s\leq s_{2},s_{k,1}\leq s\leq s_{k,2}\}$, such that $(l_{i}\cap{\overline{D(\overline{0},\lambda+\epsilon'')}})=\bigcup_{p\in Q_{i}\cup V_{i}}B(p,\epsilon'')\cup\bigcup_{p\in {W_{i}\setminus V_{i}}}B(p,\epsilon)$, where\\
$V_{i}=({W_{i}\setminus \{p_{i,j}:j\neq i\}})$, $(*)$. Using $(ii)$, we have, as $\epsilon''<\epsilon$, that if $w\in (D(\overline{0},\lambda)\cap C_{\overline{l}})$, there exists $w'\in C_{lines}$, with $w\in B(w',\epsilon'')$. By the triangle inequality, we have that $w'\in D(0,\lambda+\epsilon'')$, hence, $w'\in (l_{i}\cap{\overline{D(\overline{0},\lambda+\epsilon'')}})$, for some $1\leq i\leq n$, $(**)$. It follows, by $(*),(**)$, that $(D(\overline{0},\lambda)\cap C_{\overline{l}})\subset \bigcup_{1\leq i\leq n,p\in Q_{i}}(C_{\overline{l}}\cap B(p,2\epsilon''))\cup{1\leq i\leq n,p\in V_{i}}(C_{\overline{l}}\cap B(p,\epsilon''))\bigcup\bigcup_{1\leq i\leq n,p\in {W_{i}\setminus V_{i}}}(C_{\overline{l}}\cap B(p,\epsilon))$. By $(iii)$, as $2\epsilon''<\epsilon'$, we have that, for $1\leq i\leq n$, $p\in Q_{i}$, $pr_{x}:(C_{\overline{l}}\cap B(p,2\epsilon''))\rightarrow B_{x_{p},2\epsilon''}$ is an isomorphism, $(***)$. Moreover, for any such disc $(C_{\overline{l}}\cap B(p,2\epsilon''))$, there exists a finite chain $\{t_{i}:1\leq i\leq r(p)\leq P\}$, with the property that $t_{1}=p$, $(C_{\overline{l}}\cap B(t_{i},2\epsilon'')\cap (C_{\overline{l}}\cap B(t_{i+1},2\epsilon'')))\neq\emptyset$, $(C_{\overline{l}}\cap B(t_{r(p)},2\epsilon'')\cap (C_{\overline{l}}\cap (B(p,\epsilon)\cup B(q,\epsilon'')))\neq\emptyset$, some $p\in {W_{i}\setminus V_{i}}$, $q\in V_{i}$, $(****)$. We let;\\

 $K_{1}=\bigcup_{1\leq i\leq n}W_{i}$\\

 $C_{1,\overline{l}}=\bigcup_{1\leq i\leq n,p\in V_{i}}(C_{\overline{l}}\cap B(p,\epsilon''))\cup\bigcup_{1\leq i\leq n,p\in{W_{i}\setminus V_{i}}} (C_{\overline{l}}\cap B(p,\epsilon))$\\

 and inductively, define;\\

 $K_{j+1}=K_{j}\cup \bigcup_{1\leq i\leq n}\{p\in Q_{i}:B(p,2\epsilon'')\cap C_{j,\overline{l}}\neq\emptyset\}$\\

 $C_{j+1,\overline{l}}=C_{j,\overline{l}}\cup\bigcup_{p\in {K_{j+1}\setminus K_{j}}}(C_{\overline{l}}\cap B(p,\epsilon''))$\\

 By $(****)$, we have that $C_{\overline{l}}\cap B(\overline{0},\lambda)=C_{B,\overline{l}}$, for some $B\leq P$, and, by $(\dag)$, $C_{\overline{l}}=C_{B+1,\overline{l}}=C_{B,\overline{l}}\cup\bigcup_{1\leq i\leq n}D'_{i,\overline{l}}$.\\

We have that $C_{j,\overline{l}}\subset C_{j+1,\overline{l}}$, for $1\leq j\leq B$, and $C_{1,\overline{l}}\cong T^{n}_{1,g-1}$. It follows easily, as each $C_{j,\overline{l}}\subset P^{2}$ is open in the complex topology, for $1\leq j\leq B$, and $C_{B+1,\overline{l}}$ is closed, nonsingular, that $C_{\overline{l}}$ is isomorphic (topologically) to $T_{1,g-1}=S_{g}$, where $S_{g}$ is a sphere with $g$ attached handles.\\

The final claim follows from the proof of the degree-genus formula, with Severi's definition of genus, see \cite{dep1}.\\

\end{proof}

\begin{rmk}
\label{evenodd}
The case when $n$ is even, is left to the reader, the idea being simply to change coordinates, so that there are no intersections $p_{i,j}=(l_{i}\cap l_{j})$ at infinity, and apply the methods of Section 2.

\end{rmk}

\begin{rmk}
\label{degreegenus}
This gives an alternative proof of the (topological) degree-genus formula, see 4.1.1 of \cite{K}, another proof can be found in 4.1.2 of \cite{K}.

\end{rmk}

\end{section}

\end{document}